\numberwithin{equation}{section}
\theoremstyle{plain}
\newtheorem{theorem}{Theorem}[section]
\newtheorem{lemma}[theorem]{Lemma}}
\theoremstyle{remark}   
\newtheorem{remark}[theorem]{Remark}}
\title{On a variant of the Beckmann--Black problem}
\author{Fran\c{c}ois Legrand}
\email{francois.legrand@unicaen.fr}
\address{Normandie Univ., UNICAEN, CNRS, Laboratoire de Math\'ematiques Nicolas Oresme, 14000 Caen, France}
\begin{document}

\maketitle

\vspace{-10mm}

\begin{abstract}
Given a field $k$ and a finite group $G$, the Beckmann--Black problem asks whether every Galois field extension $F/k$ with group $G$ is the specialization at some $t_0 \in k$ of some Galois field extension $E/k(T)$ with group $G$ and $E \cap \overline{k} = k$. We show that the answer is positive for arbitrary $k$ and $G$, if one waives the requirement that $E/k(T)$ is normal. In fact, our result holds if ${\rm{Gal}}(F/k)$ is any given subgroup $H$ of $G$ and, in the special case $H=G$, we provide a similar conclusion even if $F/k$ is not normal. We next derive that, given a division ring $H$ and an automorphism $\sigma$ of $H$ of finite order, all finite groups occur as automorphism groups over the skew field of fractions $H(T, \sigma)$ of the twisted polynomial ring $H[T, \sigma]$.
\end{abstract}

\section{Introduction} \label{sec:intro}

The inverse Galois problem (over $\mathbb{Q}$), a question going back to Hilbert and Noether, asks whe\-ther every finite group is the Galois group of a Galois field extension of $\mathbb{Q}$. Various techniques, including cohomological methods, the study of Galois representations of the absolute Galois group of $\mathbb{Q}$, and the construction of Galois covers of $\mathbb{P}^1_\mathbb{Q}$ with specified Galois groups and the use of Hilbert's irreducibility theorem, allow to realize various finite groups $G$ as Galois groups over $\mathbb{Q}$, such as $G$ solvable (Shafarevich's theorem; see \cite[Theorem 9.6.1]{NSW08}), $G= {\rm{PSL}}_2(\mathbb{F}_p)$ where $p$ is an arbitrary prime number (Zywina \cite{Zyw15}), $G=S_n$ and $G=A_n$ (see, e.g., \cite[\S16.4]{FJ08}), $G$ sporadic but different from the Mathieu group M$_{23}$ (see, e.g., \cite[Chapter II, \S9]{MM18} for references), etc. Yet, the inverse Galois problem is still open.

In \cite{FK78}, using a result of Frucht \cite{Fru39} on the existence of finite undirected graphs ha\-ving neither loops nor isolated points and with pres\-cribed automorphism groups, E. Fried and Kollar prove the following for $k=\mathbb{Q}$: given a finite group $G$, there is a finite separable (non-necessarily normal) field extension $L/k$ with ${\rm{Aut}}(L/k)=G$ ($*$). Clearly, this unconditional conclusion is necessary for a po\-sitive ans\-wer to the inverse Galois problem, hence the interest in the result. Other proofs and/or generalizations are given in subsequent works by a number of authors. For example, in \cite{Fri80}, M. Fried uses Hilbert's irreducibility the\-o\-rem to reobtain ($*$) for $k=\mathbb{Q}$, and actually shows that, given a finite group $G$, there are infinitely number fields $L$ with ${\rm{Aut}}(L/\mathbb{Q})=G$. Independently, Takahashi \cite{Tak80} shows that ($*$) holds if $k$ is an arbitrary global field. Moreover, in \cite{Gey83}, Geyer reproves ($*$) for $k=\mathbb{Q}$; his method is similar to that of M. Fried but is simpler as Hilbert's irreducibility theorem is not used. Finally, in \cite{LP18}, Paran and the author show that, given a finite group $G$ and a Hilbertian field $k$ \footnote{Recall that a field $k$ is {\it{Hilbertian}} if Hilbert's irreducibility theorem holds over $k$. For example, global fields and rational function fields are Hilbertian. See, e.g., \cite{FJ08} for more on Hilbertian fields.}, there are infinitely many finite separable field extensions $L/k$ with ${\rm{Aut}}(L/k)=G$.

M. Fried's method is geometric and, although it is not explicitly stated there, it actually yields that, given a finite group $G$ and a field $k$ of characteristic zero, there is a finite separable field ex\-ten\-sion $E/k(T)$ with ${\rm{Aut}}(E/k(T))=G$ and $E \cap \overline{k} = k$. This was later extended to all fields by Deschamps and the author in \cite[th\'eor\`eme A]{DL21}. On the one hand, this result generalizes those from the last paragraph. On the other hand, it is necessary for a positive answer to the regular inverse Galois problem (the geometric approach to the inverse Galois problem) over an arbitrary field $k$, which asks whether every finite group is the Galois group of a Galois field extension $E/k(T)$ with $E \cap \overline{k}=k$. Recall that, although no counter-example is known and apart from an example of Koenigsmann \cite{Koe04}, all fields which are currently known to fulfill the regular inverse Galois problem contain an ample field \cite{Pop96}\footnote{Recall that a field $k$ is {\it{ample}} (or {\it{large}}) if every smooth geometrically irreducible $k$-curve has zero or infinitely many $k$-rational points. Ample fields include algebraically closed fields, the complete valued fields $\mathbb{Q}_p$, $\mathbb{R}$, $\kappa((Y))$, the field $\mathbb{Q}^{\rm{tr}}$ of totally real numbers, etc. See, e.g., \cite{Jar11, BSF13, Pop14} for more details.}.

Let us also mention that there is a number of other results in the literature on realizing finite groups as automorphism groups in the geometric function field case. For example, given a finite group $G$ and an algebraically closed field $k$, there are infinitely many non-isomorphic function fields $L$ in one variable over $k$ with ${\rm{Aut}}(L/k)=G$ (Madden--Valentini \cite{MV83}). In fact, given a non-trivial finite group $G$ and a function field $K$ in one variable over an arbitrary algebraically closed field $k$, there are infinitely many Galois field extensions $L/K$ with ${\rm{Gal}}(L/K) = {\rm{Aut}}(L/k) = G$, as proved by Greenberg \cite{Gre74} if $k=\mathbb{C}$, Stichtenoth if $K$ has genus at least 2 \cite{Sti84}, and Madan--Rosen in general \cite{MR92}.

\subsection{Function field extensions with specified specializations} \label{ssec:intro_1}

A generalization of \cite[th\'eor\`eme A]{DL21}, in the direction of finite embedding problems, is studied by Fehm, Paran, and the author in \cite{FLP19}. In \S\ref{sec:proof} of the present paper, we also go further than producing finite separable field extensions $E/k(T)$ with $E \cap \overline{k}=k$ and specified automorphism groups, but in another direction: we construct such extensions with specified specializations. Given $t_0 \in k$, by {\it{a specialization}} of $E/k(T)$ at $t_0$, we mean an extension of the form $(B/\frak{P})/k$, with $\frak{P}$ a prime ideal of the integral closure $B$ of $k[T]$ in $E$ containing $T - t_0$. See \S\ref{ssec:prelim_2} for more details.

\begin{theorem} \label{thm:intro_1}
Let $k$ be a field, $H \leq G$ finite groups, and $F/k$ a finite separable field extension with automorphism group $H$.

\vspace{0.5mm}

\noindent
{\rm{(1)}} If $F/k$ is Galois, there is a finite separable field extension $E/k(T)$ with ${\rm{Aut}}(E/k(T))=G$ and $E \cap \overline{k} = k$, in which $\langle T \rangle$ is unramified and for which every specialization at 0 equals $F/k$.

\vspace{0.5mm}

\noindent
{\rm{(2)}} If $H=G$, there is a finite separable field extension $E/k(T)$ with ${\rm{Aut}}(E/k(T))=G$ and $E \cap \overline{k} = k$, and for which some specialization at 0 equals $F/k$ (and the corresponding prime ideal is unramified in $E/k(T)$).
\end{theorem}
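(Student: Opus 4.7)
My plan is to build $E/k(T)$ as an intermediate field inside a Galois extension $\widehat{E}/k(T)$, whose group $\widehat{G}$ contains a subgroup $U$ with $N_{\widehat{G}}(U)/U \cong G$ (so that ${\rm{Aut}}(\widehat{E}^U/k(T)) = G$ automatically), and whose decomposition group at a chosen unramified prime above $T = 0$ realizes $F/k$ as the desired specialization. The two key inputs are \cite[th\'eor\`eme A]{DL21}, which guarantees the existence of regular extensions of $k(T)$ with prescribed automorphism group, and the standard correspondence between specializations of an intermediate field and the action of decomposition groups in a Galois closure.

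For part (1), where $F/k$ is Galois with group $H$, I would start from the Galois base change $F(T)/k(T)$: it is unramified at $T = 0$ with unique specialization exactly $F/k$. Then I would enlarge it to a regular Galois extension $\widehat{E}/k(T)$ whose group $\widehat{G}$ surjects onto $H$, is chosen so as to contain a subgroup $U$ with $N_{\widehat{G}}(U)/U \cong G$, and so that the unique prime above $T = 0$ in $F(T)$ is totally split in $\widehat{E}/F(T)$. Such an enlargement can be produced by applying \cite[th\'eor\`eme A]{DL21} over the auxiliary base $F$ to realize the kernel as a regular extension of $F(T)$ with a specified split unramified place above $T$, then descending to $k(T)$ by compositum. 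Setting $E := \widehat{E}^U$ then forces ${\rm{Aut}}(E/k(T)) = G$, the splitting behavior forces each specialization of $E$ above $T = 0$ to be a copy of $F/k$, and regularity $E \cap \overline{k} = k$ follows from $\widehat{E} \cap \overline{k} = k$.

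For part (2), where ${\rm{Aut}}(F/k) = G$ but $F/k$ need not be normal, I would pass to the Galois closure $\widetilde{F}/k$ with group $\widetilde{H}$ and fix $V \leq \widetilde{H}$ with $F = \widetilde{F}^V$ and $N_{\widetilde{H}}(V)/V = G$. Applying the Galois case of part (1) (with $\widetilde{H}$ in place of both $H$ and $G$, and $\widetilde{F}$ in place of $F$), one obtains a Galois $\widetilde{E}/k(T)$ of group $\widetilde{H}$, regular over $k$, with every specialization at $0$ equal to $\widetilde{F}/k$. Setting $E := \widetilde{E}^V$, the normalizer formula yields ${\rm{Aut}}(E/k(T)) = N_{\widetilde{H}}(V)/V = G$, and the specialization calculus for intermediate fields gives a specialization of $E$ at $0$ equal to $\widetilde{F}^V = F$, as desired.

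The principal obstacle is the simultaneous control of the three conditions ${\rm{Aut}}(E/k(T)) = G$, regularity $E \cap \overline{k} = k$, and the specialization at $0$. The automorphism-group condition dictates a rigid group-theoretic structure for $\widehat{G}$, and \cite[th\'eor\`eme A]{DL21} is the source of the needed flexibility; the delicate point is to perform the enlargement of $F(T)/k(T)$ in a way that is regular over $F$ (to preserve regularity over $k$ after descent) and totally split at the chosen prime above $T$ (to preserve the specialization $F/k$). It is precisely the ability of the \cite{DL21} construction to impose such split places while realizing a prescribed automorphism group that makes the argument go through.
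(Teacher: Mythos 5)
Your overall architecture (realize $E$ as $\widehat{E}^U$ inside a regular Galois extension $\widehat{E}/k(T)$, use $N_{\widehat{G}}(U)/U\cong G$ for the automorphism group and the decomposition group at $T=0$ for the specialization) matches the skeleton of the paper's Lemma~\ref{lemma:BB2}, but the step that is supposed to supply $\widehat{E}$ does not work, and this is exactly where the difficulty of the theorem lies. You ask for a $k$-regular \emph{Galois} extension $\widehat{E}/k(T)$ containing $F(T)$, with a prescribed group $\widehat{G}$ (chosen so that some core-free $U\leq\ker(\widehat{G}\to H)$ has $N_{\widehat{G}}(U)/U\cong G$) and with the prime above $T=0$ totally split in $\widehat{E}/F(T)$. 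Producing such an extension over an \emph{arbitrary} field $k$ is a regular Galois realization with a prescribed totally split fibre; this is known over ample fields (Pop, Haran--Jarden, Moret-Bailly) but is open in general --- it contains the regular inverse Galois problem. The reference you invoke, \cite[th\'eor\`eme A]{DL21}, produces extensions with prescribed \emph{automorphism} group only (not Galois group) and gives no control whatsoever on specializations; your closing sentence, which attributes to \cite{DL21} ``the ability \dots to impose such split places,'' asserts precisely the property that is missing. A related gap is fatal for your part (2): you apply part (1) with $H=G=\widetilde{H}$ and read off a \emph{Galois} extension $\widetilde{E}/k(T)$ of group $\widetilde{H}$ specializing to $\widetilde{F}/k$, but part (1) only yields ${\rm{Aut}}(\widetilde{E}/k(T))=\widetilde{H}$, not normality (the paper's footnote stresses that these extensions need not be normal); a Galois such $\widetilde{E}$ would be a positive answer to the Beckmann--Black problem for $\widetilde{H}$, which is open. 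You also never exhibit the group-theoretic configuration $(\widehat{G},U)$ compatible with the surjection onto $H$.

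The paper circumvents both obstacles by never asking for a Galois realization of anything other than a symmetric group. Lemma~\ref{lem:s_n} (Beckmann--Black for $S_n$, by explicit polynomial interpolation) supplies a $k$-regular $S_n$-extension $M/k(X)$ specializing at $0$ to $F$ (or to the Galois closure $\widehat{F}$), inside which one finds intermediate fields $L\subseteq E$ with ${\rm{Aut}}(E/L)=G$ and the correct residue field at a prime above $X=0$ --- but with $L$ strictly larger than $k(X)$. The idea missing from your proposal is the rigidification step that brings the base back down to the rational field: adjoining a root $y$ of the trinomial $Y^3+(T-x)Y+(T-x)$, where $x$ generates $L$ over $k(X)$, forces every automorphism of $\mathcal{E}=E(T,y)$ over $k(X,T)$ to fix $L(T,y)$ (Lemma~\ref{lem:LP}), so that ${\rm{Aut}}(\mathcal{E}/k(X,T))=G$ without ever realizing $G$, or any group adapted to $G$, as a regular Galois group. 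The specialization is then preserved by choosing $a$ with $Y^3+aY+a$ split over $k$ and applying Krasner's lemma, and a final descent (Lemma~\ref{lemma:0}) removes the auxiliary variable $X$.
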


Our result relates to the Beckmann--Black problem, whose motivation was to explore the possible limitations of the geometric approach to the inverse Galois problem: for a field $k$ and a finite group $G$, is any given Galois field extension $F/k$ of group $G$ the specialization at some $t_0 \in k$ of some Galois field extension $E/k(T)$ with group $G$ and $E \cap \overline{k} = k$? As shown by Moret-Bailly \cite{MB01}, extending a former result in characteristic zero of Colliot-Th\'el\`ene \cite{CT00}, the answer is {\it{Yes}} for arbitrary $G$ if $k$ is ample. If $k=\mathbb{Q}$, the answer is known to be {\it{Yes}} for only a few groups $G$, including abelian groups (Beckmann \cite{Bec94}), some dihedral groups (Black \cite{Bla98, Bla99}), symmetric groups, and alternating groups (Mestre \cite{Mes90}, Kl\"uners--Malle \cite{KM01}), and no counter-example is known. While this gives support for the geometric approach, it should also be recalled that, if the answer to the Beckmann--Black problem is affirmative for every finite group and every field, then all fields fulfill the regular inverse Galois problem (D\`ebes \cite{Deb99a}). This last result is another motivation for the study of the Beckmann--Black problem, as it shows that positive results about this problem provide evidence for a positive answer to the (regular) inverse Galois problem, but it also shows that answering the Beckmann--Black problem positively in full generality is currently out of reach.

Given a field $k$ and a finite group $G$, taking $H=G$ in Theorem \ref{thm:intro_1}(1) yields immediately that the answer to the Beckmann--Black problem is affirmative, if one waives the requirement that $E/k(T)$ is normal\footnote{As the extensions $E/k(T)$ from Theorem \ref{thm:intro_1}(1) are not necessarily normal, there might be several prime ideals containing $T$ in such $E/k(T)$. However, as the theorem shows, the set of corresponding specializations is a singleton, and one may then speak of ``the" specialization of such $E/k(T)$ at 0.}. Once again, such an unconditional conclusion is necessary for a positive answer to the original question, hence the interest in the result. As to Theorem \ref{thm:intro_1}(2), it solves a variant for automorphism groups of the Beckmann--Black problem.

\subsection{Application to division rings} \label{ssec:intro_2}

The inverse Galois problem and its generalizations are traditionally studied over fields. Yet, thanks to the Artinian viewpoint, Galois theory of fields can be generalized to division rings (see \cite{Jac64, Coh95} for more details), thus allowing to study inverse Galois theory over division rings. See \cite{DL20, ALP20, Beh21, BDL20f, Leg20, FL20, Des21a} for recent works on (the non-commutative aspects of) this topic. 

In \S\ref{sec:proof_2}, we use Theorem \ref{thm:intro_1}(1) to give a non-Galois variant of a recent result of Behajaina \cite{Beh21} on solving the inverse Galois problem over division rings of the form $H(T, \sigma)$. Given a division ring $H$ and an automorphism $\sigma$ of $H$, by $H(T, \sigma)$, we mean the unique division ring containing the twisted polynomial ring $H[T, \sigma]$ and every element of which can be written as $rs^{-1}$ with $r \in H[T, \sigma]$ and $s \in H[T, \sigma] \setminus \{0\}$. We refer to \S\ref{ssec:prelim_1} for more details and only say here that, if $H$ is a field and $\sigma = {\rm{id}}_H$, then $H(T, \sigma)$ is the rational function field $H(T)$. 

Letting $h$ denote the center of $H$ and assuming that $\sigma$ has finite order, Behajaina shows that, if a given finite group $G$ is the Galois group of a Galois field extension of $h^{\langle \sigma \rangle}(T)$ which is totally split at $\langle T \rangle$, where $h^{\langle \sigma \rangle}$ is the fixed field of $\sigma$ in $h$, then $G$ is the Galois group of a Galois extension of $H(T, \sigma)$ (this is recalled as Lemma \ref{lem:angelot}). In particular, using that the assumption in this last implication is satisfied over ample fields (see \cite{Pop96, HJ98b}), the answer to the inverse Galois problem over $H(T, \sigma)$ is positive, if $h^{\langle \sigma \rangle}$ contains an ample field.

Here we show that, if a given finite group $G$ is the automorphism group of a finite separable field extension of $h^{\langle \sigma \rangle}(T)$ which is totally split at $\langle T \rangle$, then $G$ is an automorphism group over $H(T, \sigma)$ (see Lemma \ref{angelot}). By taking $F=k$ in Theorem \ref{thm:intro_1}(1), we then get the next result, which has no assumption on $h^{\langle \sigma \rangle}$:

\begin{theorem} \label{thm:intro_2}
Let $H$ be a division ring, $\sigma$ an automorphism of $H$ of finite order, and $G$ a finite group. There exists an extension of $H(T, \sigma)$ with automorphism group $G$.
\end{theorem}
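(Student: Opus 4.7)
The plan is to reduce Theorem \ref{thm:intro_2} to a combination of Theorem \ref{thm:intro_1}(1) and Lemma \ref{angelot}, both of which are already available from the preceding discussion.

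First, I would set $k := h^{\langle \sigma \rangle}$ and apply Theorem \ref{thm:intro_1}(1) with subgroup $\{1\} \leq G$ and with $F := k$ (the trivial field extension, which is certainly separable and Galois with trivial automorphism group). This produces a finite separable field extension $E/k(T)$ such that ${\rm{Aut}}(E/k(T)) = G$, $E \cap \overline{k} = k$, the prime $\langle T \rangle$ of $k[T]$ is unramified in the integral closure $B$ of $k[T]$ in $E$, and every specialization of $E/k(T)$ at $0$ equals $k/k$.

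Second, I would translate the conclusion ``every specialization at $0$ equals $k/k$'' into total splitting of $\langle T \rangle$ in $E/k(T)$. Indeed, for any prime $\frak{P}$ of $B$ containing $T$, the residue extension $(B/\frak{P})/k$ is trivial, so the residue degree $f(\frak{P})$ equals $1$; together with unramifiedness $e(\frak{P}) = 1$ and the standard identity $\sum_{\frak{P}} e(\frak{P}) f(\frak{P}) = [E : k(T)]$ (valid since $B$ is a Dedekind domain and $E/k(T)$ is finite separable), this forces exactly $[E : k(T)]$ primes above $\langle T \rangle$, which is precisely total splitting.

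Third, I would feed $E/h^{\langle \sigma \rangle}(T)$ into Lemma \ref{angelot}: since $G$ has just been realized as the automorphism group of a finite separable extension of $h^{\langle \sigma \rangle}(T)$ that is totally split at $\langle T \rangle$, the lemma yields directly that $G$ occurs as an automorphism group over $H(T, \sigma)$, completing the proof. The main obstacle in this argument is essentially absent once Theorem \ref{thm:intro_1}(1) and Lemma \ref{angelot} are in hand: the only subtle point is the translation from the specialization condition to total splitting, and this is a routine degree-counting argument. In particular, the crucial feature of Theorem \ref{thm:intro_1}(1) exploited here is that it fixes \emph{every} specialization at $0$, which is what prevents any residue degree $f(\frak{P})$ from being larger than $1$.
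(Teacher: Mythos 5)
Your proposal is correct and follows essentially the same route as the paper: apply Theorem \ref{thm:intro_1}(1) with $F=k=h^{\langle\sigma\rangle}$ to produce a finite separable extension with automorphism group $G$ that is totally split at the origin (your degree-count $\sum_{\frak{P}} e(\frak{P})f(\frak{P})=[E:k(T)]$ is exactly the right translation), and then invoke Lemma \ref{angelot}. The only points to tidy up are that Lemma \ref{angelot} is stated for extensions of $h^{\langle\sigma\rangle}(T^n)$ (with $n$ the order of $\sigma$) whose Galois closure embeds into $h^{\langle\sigma\rangle}((T^n))$, so you should apply Theorem \ref{thm:intro_1}(1) with the indeterminate $T^n$ in place of $T$, and observe that total splitting of $e$ at $\langle T^n\rangle$ passes to its Galois closure, which therefore embeds into $h^{\langle\sigma\rangle}((T^n))$ as the lemma requires.
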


{\bf{Acknowledgements.}} We thank Angelot Behajaina for helpful dis\-cussions about Lemmas \ref{lem:angelot} and \ref{angelot}, and the anonymous referees for several valuable comments and suggestions. The present work fits into Project TIGANOCO, which is funded by the European Union within the framework of the Operational Programme ERDF/ESF 2014-2020.

\section{Preliminaries on division rings and function field extensions} \label{sec:prelim}

\subsection{Division rings} \label{ssec:prelim_1}

A {\it{division ring}} is a non-necessarily commutative ring in which every non-zero element is invertible. Of course, commutative division rings are nothing but {\it{fields}}.

Given division rings $H \subseteq L$, we may consider $L$ as a left vector space over $H$ or a right vector space over $H$. In the sequel, we will always consider $L$ as a left vector space over $H$. We then say that $L/H$ is {\it{finite}} if the (left) dimension $[L:H]$ of $L$ over $H$ is finite.

Given division rings $H \subseteq L$, let ${\rm{Aut}}(L/H)$ be the {\it{automorphism group}} of $L/H$, i.e., the group of all automorphisms of $L$ fixing $H$ pointwise. We say that $L/H$ is {\it{outer}} if the only inner automorphism of $L$ lying in ${\rm{Aut}}(L/H)$ is the identity ${\rm{id}}_L$ of $L$. Clearly, if $L$ is a field, then $H$ is a field and $L/H$ is outer. Conversely, if $H$ is a field and $L/H$ is outer, then $L$ is a field (see, e.g., \cite[lemme 2.1]{BDL20f}). 

Following Artin, we say that an extension $L/H$ of division rings is {\it{Galois}} if every element $x$ of $L$ fulfilling $\sigma(x) =x$ for every $\sigma \in {\rm{Aut}}(L/H)$ is in $H$. If $L/H$ is Galois, then ${\rm{Aut}}(L/H)$ is the {\it{Galois group}} ${\rm{Gal}}(L/H)$ of $L/H$. Let $L/H$ be a Galois extension with finite Galois group. Then $L/H$ is finite, and $[L:H] = |{\rm{Gal}}(L/H)|$ if and only if $L/H$ is outer (see \cite[\S2, th\'eor\`eme]{Des18}). Moreover, if $H$ is of finite dimension over its center, then $L/H$ is outer (see, e.g., the first paragraph of the proof of \cite[corollaire 2]{DL20}). In particular, if $H$ is a field, then the field extension $L/H$ is finite, normal, and separable.

A (non-necessarily commutative) non-zero ring $R$ with no zero divisor is a {\it{right Ore domain}} if, for all $x, y \in R \setminus \{0\}$, there are $r, s \in R$ with $xr = ys \not=0$. If $R$ is a right Ore domain, there is a division ring $H$ which contains $R$ and every element of which can be written as $rs^{-1}$ with $r \in R$ and $s \in R \setminus \{0\}$ (see, e.g., \cite[Theo\-rem 6.8]{GW04}). Moreover, such a division ring $H$ is unique up to isomorphism (see, e.g., \cite[Proposition 1.3.4]{Coh95}).

Let $H$ be a division ring and $\sigma$ an automorphism of $H$. The {\it{twisted polynomial ring}} $H[T, \sigma]$ is the ring of  polynomials $a_0 + a_1 T + \cdots + a_m T^m$ with $m \geq 0$ and $a_0, \dots, a_m \in H$, whose addition is defined componentwise and multiplication fulfills $Ta = \sigma(a) T$ ($a \in H$). Note that $H[T, \sigma]$ is commutative if and only if $H$ is a field and $\sigma={\rm{id}}_H$. In the sense of Ore (see \cite{Ore33}), $H[T, \sigma]$ is the twisted polynomial ring $H[T, \sigma, \delta]$ in the variable $T$, where the $\sigma$-derivation $\delta$ is 0. The ring $H[T, \sigma]$ has no zero divisor, as the degree is additive on products, and is a right Ore domain (see, e.g., \cite[Theorem 2.6 and Corollary 6.7]{GW04}). The unique division ring which contains $H[T, \sigma]$ and each element of which can be written as $rs^{-1}$ with $r \in H[T, \sigma]$ and $s \in H[T, \sigma] \setminus \{0\}$ is then denoted by $H(T, \sigma)$. If $\sigma={\rm{id}}_H$, we write $H[T]$ and $H(T)$ instead of $H[T, {\rm{id}}_H]$ and $H(T, {\rm{id}}_H)$, respectively. If $H$ is a field, $H(T)$ is nothing but the usual field of fractions of the commutative polynomial ring $H[T]$.

The proof of Theorem \ref{thm:intro_2} will require the next lemma, which is almost contained in \cite{Beh21}. As announced in \S\ref{ssec:intro_2}, the lemma will be extended to automorphism groups in Lemma \ref{angelot}.

\begin{lemma} \label{lem:angelot}
Let $H$ be a division ring with center $h$, let $\sigma$ be an automorphism of $H$ of finite order $n$, let $h^{\langle \sigma \rangle} = \{x \in h \, : \, \sigma(x) = x\}$, and let $e$ be a Galois extension of $h^{\langle \sigma \rangle}(T^n)$ with finite Galois group and such that $e \subseteq h^{\langle \sigma \rangle}((T^n))$. Then the following three conclusions hold:

\vspace{0.5mm}

\noindent
{\rm{(1)}} $E = H(T, \sigma) \otimes_{h^{\langle \sigma \rangle}(T^n)} e$ is a division ring, which is a Galois extension of $H(T, \sigma)$,

\noindent
{\rm{(2)}} every element of ${\rm{Gal}}(E/H(T, \sigma))$ induces by restriction an element of ${\rm{Gal}}(e/h^{\langle \sigma \rangle}(T^n))$, and the corresponding map ${\rm{Gal}}(E/H(T, \sigma)) \rightarrow {\rm{Gal}}(e/h^{\langle \sigma \rangle}(T^n))$ is an isomorphism,

\vspace{0.5mm}

\noindent
{\rm{(3)}} $E/H(T, \sigma)$ is outer.
\end{lemma}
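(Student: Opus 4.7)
The plan is to follow Behajaina's approach in \cite{Beh21}, realizing everything inside the twisted Laurent series division ring $H((T, \sigma))$ that contains $H(T, \sigma)$. First I would record that $K := h^{\langle \sigma \rangle}(T^n)$ lies in the center of $H((T, \sigma))$: elements of $h^{\langle \sigma \rangle}$ are central in $H$ and fixed by $\sigma$, so they commute with $T$, while $T^n a = \sigma^n(a) T^n = a T^n$ for every $a \in H$. The hypothesis $e \subseteq h^{\langle \sigma \rangle}((T^n))$ then forces $e \subseteq Z(H((T, \sigma)))$, so the multiplication map $\mu : E = H(T, \sigma) \otimes_K e \to H((T, \sigma))$, $x \otimes y \mapsto xy$, is a well-defined ring homomorphism.

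For (1), I would pick a primitive element $\alpha$ of $e/K$ with minimal polynomial $f \in K[X]$ of degree $m = [e:K]$ and identify $E$ with $H(T, \sigma)[X]/(f(X))$, a free left $H(T, \sigma)$-module of rank $m$. Injectivity of $\mu$ then reduces to the left linear independence of $1, \alpha, \dots, \alpha^{m-1}$ over $H(T, \sigma)$ inside $H((T, \sigma))$; this is the technical heart of the lemma and is exactly what is established in \cite{Beh21}. Granting it, $E$ sits as a domain inside the division ring $H((T, \sigma))$ and is finite dimensional over the central subfield $K$, so every nonzero $x \in E$ is algebraic over $K$, hence $x^{-1} \in K[x] \subseteq E$, making $E$ a division ring.

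Turning to (3), I would compute the centralizer of $H(T, \sigma)$ in $E$. Fixing a $K$-basis $(e_i)$ of $e$ and writing $u = \sum_i u_i \otimes e_i \in E$, centrality of the $e_i$ in $H((T, \sigma))$ together with their left linear independence over $H(T, \sigma)$ reduces the condition ``$xu = ux$ for every $x \in H(T, \sigma)$'' to $u_i \in Z(H(T, \sigma))$ for all $i$. Since $Z(H(T, \sigma))$ commutes with $H(T, \sigma)$ by definition and with $e$ (which is central in $H((T, \sigma))$ by the first paragraph), such $u$ lies in $Z(E)$, so the only inner automorphism of $E$ fixing $H(T, \sigma)$ pointwise is ${\rm id}_E$, proving that $E/H(T, \sigma)$ is outer.

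Finally, for (2), every $\tau \in {\rm Gal}(e/K)$ extends to $\widetilde\tau := {\rm id} \otimes \tau \in {\rm Aut}(E/H(T, \sigma))$, yielding an injective group homomorphism $\phi : {\rm Gal}(e/K) \to {\rm Aut}(E/H(T, \sigma))$. Since $H(T, \sigma)$ is a $K$-vector space, combining $e^{{\rm Gal}(e/K)} = K$ with the fact that taking invariants commutes with tensor products over a field gives $E^{\phi({\rm Gal}(e/K))} = H(T, \sigma) \otimes_K K = H(T, \sigma)$, and hence also $E^{{\rm Aut}(E/H(T, \sigma))} = H(T, \sigma)$, i.e., $E/H(T, \sigma)$ is Galois in the sense of Artin. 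Combining (3) with the characterization recalled in the preliminaries then yields $|{\rm Gal}(E/H(T, \sigma))| = [E : H(T, \sigma)] = [e:K] = |{\rm Gal}(e/K)|$, so $\phi$ is bijective; its inverse is the restriction map, since every $\tau' \in {\rm Gal}(E/H(T, \sigma))$ sends $\alpha$ to a root of $f$ in $E$, which necessarily lies in $e$. The main obstacle in this plan is the left linear independence assertion underpinning (1), on which the rest rests; this is precisely the input imported from \cite{Beh21}.
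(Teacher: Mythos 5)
Your architecture is sound but inverts the paper's logic in an instructive way. The paper simply cites \cite{Beh21} for conclusions (1) and (2) and then deduces (3) from (2) via the equivalence ``$[E:H(T,\sigma)]=|{\rm Gal}(E/H(T,\sigma))|$ if and only if $E/H(T,\sigma)$ is outer'' recalled in \S\ref{ssec:prelim_1}. You instead prove (3) directly by a centralizer computation --- which uses only that $1\otimes e$ is central in $E$ and that the $1\otimes e_i$ form a left basis over $H(T,\sigma)$, so it does not even need the embedding into $H((T,\sigma))$ --- and then deduce (2) from (3) by reading the same equivalence in the other direction, after establishing the Artin--Galois property via flatness of $H(T,\sigma)$ over $K$ and exactness of invariants. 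This makes (2) and (3) essentially self-contained given the injectivity of the multiplication map $\mu$, which is the one genuinely hard input and which you, like the paper, import from \cite{Beh21}; that is a legitimate and arguably more transparent route. Two small points there are fine but deserve a word: the finiteness of ${\rm Gal}(E/H(T,\sigma))$ (needed to invoke the criterion of \S\ref{ssec:prelim_1}) follows because the group acts faithfully on the roots of $f$ in $E$, and there are at most $m$ of these since $f$ splits into distinct linear factors over the central subfield $e$ of the division ring $E$ --- which also justifies your claim that any root of $f$ in $E$ lies in $e$.

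The one step that fails as written is in (1): you invert a nonzero $x\in E$ via ``$E$ is finite dimensional over the central subfield $K$, so $x^{-1}\in K[x]$.'' But $\dim_K E=[e:K]\cdot\dim_K H(T,\sigma)$, and $\dim_K H(T,\sigma)$ is infinite whenever $H$ is infinite dimensional over its center $h$, a case the lemma explicitly allows. The repair is standard: once $\mu$ is injective, $E$ is a domain and is a left and a right vector space of finite dimension $m=[e:K]$ over $H(T,\sigma)$ (with basis $1\otimes\alpha^i$ on either side, because $1\otimes e$ is central in $E$); for $0\neq x\in E$, right multiplication by $x$ is an injective, hence surjective, endomorphism of the finite-dimensional left $H(T,\sigma)$-space $E$, giving a left inverse of $x$, and symmetrically $x$ has a right inverse, so $x$ is invertible. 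With that substitution your argument goes through.
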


\begin{proof}[Comments on proof]
Conclusions (1) and (2) are a combination of \cite[lemme 2.1.1 \& proposition 2.1.2]{Beh21}. As to (3), the extension $E/H(T, \sigma)$ is Galois with finite Galois group and, by (2) and the definition of $E$, we have $|{\rm{Gal}}(E/H(T, \sigma))| = [E:H(T, \sigma)].$ Hence, as recalled above, $E/H(T, \sigma)$ is outer.
\end{proof}

\subsection{Function field extensions} \label{ssec:prelim_2}

Let $E/k(T)$ be a finite separable field extension. We say that $E/k(T)$ is {\it{$k$-regular}} if $E \cap \overline{k} = k$. To avoid confusion, let us recall that, if $B$ denotes the integral closure of $k[T]$ in $E$, then the prime ideal $\langle T - t_0 \rangle$ ($t_0 \in k$) of $k[T]$ is {\it{unramified}} in $E/k(T)$ if no prime ideal of $B$ containing $T- t_0$ ramifies in $E/k(T)$.

Given $t_0 \in k$, let $\frak{P}_1, \dots, \frak{P}_s$ be the prime ideals of $B$ containing $T-t_0$. For $i=1, \dots, s$, the finite extension $(B/\frak{P}_i)/k$ is {\it{a speciali\-zation}} of $E/k(T)$ at $t_0$, denoted by $E_{t_0, i}/k$.

First, assume that $E/k(T)$ is Galois, and fix $i \in \{1, \dots, s\}$. Then $E_{t_0, i}/k$ is normal. Moreover, let $D_{\frak{P}_i}$ (resp., $I_{\frak{P}_i})$ be the decomposition group (resp., the inertia group) of $E/k(T)$ at $\frak{P}_i$, and denote the reduction modulo $\frak{P}_i$ of any given element $x$ of $B$ by $\overline{x}$. Then the map

\begin{equation} \label{eq:psi}
\psi : \left \{ \begin{array} {ccc}
D_{\frak{P}_i} & \longrightarrow & {\rm{Aut}}({E}_{t_0, i}/k) \\
\sigma & \longmapsto & \overline{\sigma}
\end{array} \right. ,
\end{equation}
where $\overline{\sigma}(\overline{x}) = \overline{\sigma(x)}$ ($\sigma \in D_{\frak{P}_i}$, $x \in {B}$), is an epimorphism of kernel $I_{\frak{P}_i}$. If $\langle T - t_0 \rangle$ is unramified in ${E}/k(T)$, then ${E}_{t_0, i}/k$ is Galois and $\psi : D_{\frak{P}_i} \rightarrow {\rm{Gal}}({E}_{t_0,i}/k)$ is an isomorphism. Furthermore, for $i, j \in \{1, \dots, s\}$, there is $\sigma \in {\rm{Gal}}(E/k(T))$ with $\frak{P}_j = \sigma(\frak{P}_i)$, which then induces a $k$-isomorphism $E_{t_0,i} \rightarrow E_{t_0,j}$ (and yields $D_{\frak{P}_j} = \sigma D_{\frak{P}_i} \sigma^{-1}$, $I_{\frak{P}_j} = \sigma I_{\frak{P}_i} \sigma^{-1}$). We may then speak of {\it{the specialization}} of $E/k(T)$ at $t_0$, which is denoted by $E_{t_0}/k$ for simplicity.

Now, let $\widehat{E}$ be the Galois closure of $E$ over $k(T)$. If $\langle T - t_0 \rangle$ is unramified in $\widehat{E}/k(T)$, then $E_{t_0, 1}/k, \dots, E_{t_0, s}/k$ are separable and the compositum of the Galois closures of $E_{t_0, 1}, \dots, E_{t_0, s}$ over $k$ equals $\widehat{E}_{t_0}$. Moreover, if $[\widehat{E}_{t_0} : k] = [\widehat{E} : k(T)]$, then $s=1$, in which case we simply write $E_{t_0}/k$, and we have ${\rm{Aut}}(E_{t_0} / k) = {\rm{Aut}}(E/k(T))$.

\section{Proof of Theorem \ref{thm:intro_1}} \label{sec:proof}

Our aim here is to prove the following two statements for an arbitrary field $k$.

\vspace{2mm}

\noindent
($*/k$) {\it{Let $H \leq G$ be finite groups and let $F/k$ be a Galois extension of group $H$. There is a $k$-regular extension $E/k(T)$ with ${\rm{Aut}}(E/k(T)) = G$, such that $\langle T  \rangle$ is unramified in $E/k(T)$, and such that every speciali\-zation of $E/k(T)$ at $0$ equals $F/k$.}}

\vspace{2mm}

\noindent
($**/k$) {\it{Let $G$ be a finite group and let $F/k$ be a finite separable field extension with ${\rm{Aut}}(F/k)=G$. There exists a $k$-regular extension $E/k(T)$ with ${\rm{Aut}}(E/k(T)) = G$, and which fulfills the following: there is a prime ideal of the integral closure of $k[T]$ in $E$ containing $T$ which is unramified in $E/k(T)$ and such that the corresponding specialization of $E/k(T)$ equals $F/k$.}}

\subsection{Reduction to the case where $k$ is infinite} \label{ssec:proof_1}

We start with the next lemma, which will allow us, in particular, to reduce to the proofs of {\rm{($*/k$)}} and {\rm{($**/k$)}} for $k$ infinite. Note that the lemma is classical if the extension $E/k(X,T)$ below is assumed to be Galois.

\begin{lemma} \label{lemma:0}
Let $k$ be a field, $G$ a finite group, and $F/k$ a finite separable field extension. Assume there exists a finite separable field extension $E/k(X,T)$ with the following properties:

\vspace{0.5mm}

\noindent
{\rm{(1)}} $E \cap \overline{k} = k$,

\vspace{0.5mm}

\noindent
{\rm{(2)}} ${\rm{Aut}}(E/k(X,T)) = G$,

\vspace{0.5mm}

\noindent
{\rm{(3)}} $\langle T \rangle$ is unramified in $E/k(X)(T)$.

\vspace{0.5mm}

\noindent
Assume further that either one of the following two conditions holds:

\vspace{0.5mm}

\noindent
{\rm{(4)}} the completion at every prime ideal containing $X$ of every specialization of $E/k(X)(T)$ at $T=0$ equals $F((X))/k((X))$,

\vspace{0.5mm}

\noindent
{\rm{(5)}} the completion at some prime ideal containing $X$ of some specialization of $E/k(X)(T)$ at $T=0$ equals $F((X))/k((X))$.

\vspace{0.5mm}

\noindent
Then there is a $k$-regular extension $\mathcal{E}/k(X)$ with ${\rm{Aut}}(\mathcal{E}/k(X)) = G$ and which fulfills the following two properties:

\vspace{0.5mm}

\noindent
{\rm{(a)}} the prime ideal $\langle X \rangle$ is unramified in $\mathcal{E}/k(X)$ and every specialization of $\mathcal{E}/k(X)$ at $X=0$ equals $F/k$ (if {\rm{(4)}} holds),

\vspace{0.5mm}

\noindent
{\rm{(b)}} there is an unramified prime ideal of the integral closure of $k[X]$ in $\mathcal{E}$ which contains $X$ and such that the corresponding specialization of $\mathcal{E}/k(X)$ at $X=0$ equals $F/k$ (if {\rm{(5)}} holds).

\vspace{0.5mm}

\noindent
Moreover, if there is an extension $E/k(X,T)$ as above which is additionally Galois, then $\mathcal{E}/k(X)$ may be chosen to be Galois.
\end{lemma}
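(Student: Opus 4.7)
The plan is to take $\mathcal{E}$ to be the residue field $B/\frak{P}$, where $B$ denotes the integral closure of $k(X)[T]$ in $E$ and $\frak{P}$ is a suitable prime of $B$ above $\langle T \rangle$. In case {\rm{(5)}}, I pick the prime $\frak{P}$, together with the prime $\frak{p}$ of $B/\frak{P}$ above $\langle X \rangle$, granted by the hypothesis; in case {\rm{(4)}}, any $\frak{P}$ among the primes $\frak{P}_1, \dots, \frak{P}_s$ of $B$ above $\langle T \rangle$ works, since the completion condition then automatically holds at every prime of $B/\frak{P}$ above $\langle X \rangle$. By hypothesis {\rm{(3)}} and the facts recalled in \S\ref{ssec:prelim_2}, each $\mathcal{E}_i := B/\frak{P}_i$ is a finite separable extension of $k(X)$, and the $T$-adic completion of $E$ at $\frak{P}_i$ is canonically $\mathcal{E}_i((T))$.

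With this choice, the specialization property at $X=0$ is immediate: the hypothesis that the completion of $\mathcal{E}$ at $\frak{p}$ equals $F((X))/k((X))$ forces the ramification index of $\frak{p}$ in $\mathcal{E}/k(X)$ to be $1$ and the residue field to be $F$, whence $\frak{p}$ is unramified and the corresponding specialization of $\mathcal{E}$ at $X=0$ equals $F/k$. This yields conclusion {\rm{(b)}} under {\rm{(5)}}, and conclusion {\rm{(a)}} under {\rm{(4)}} by the same reasoning applied at every prime of $\mathcal{E}$ above $\langle X \rangle$. For the final ``Moreover'' assertion, if $E/k(X,T)$ is Galois, then each $\mathcal{E}_i/k(X)$ is Galois as well.

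The two remaining statements, ${\rm{Aut}}(\mathcal{E}/k(X)) = G$ and the $k$-regularity $\mathcal{E} \cap \overline{k} = k$, are more delicate. For the automorphism group, $G$ acts on $B$ and permutes the $\frak{P}_i$'s; the decomposition group $D_{\frak{P}} \leq G$ at $\frak{P}$ then injects into ${\rm{Aut}}(\mathcal{E}/k(X))$ via the specialization map recalled in \S\ref{ssec:prelim_2}, with trivial kernel since $\frak{P}$ is unramified. One then needs to show that $D_{\frak{P}}$ equals all of $G$ (i.e., $\frak{P}$ is $G$-fixed) and that this injection is surjective; in the Galois case this is classical, and in the non-Galois case I would pass to the Galois closure $\widehat{E}/k(X)(T)$ and exploit the identification $G = N_{\widehat{G}}(H)/H$ for the subgroup $H$ cutting out $E$. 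For the $k$-regularity, the embedding $\mathcal{E} \hookrightarrow F((X))$ coming from the completion at $\frak{p}$ gives $\mathcal{E} \cap \overline{k} \subseteq F((X)) \cap \overline{k} = F$ (a standard Hensel-type fact for algebraic elements in complete DVRs), and one descends to $\mathcal{E} \cap \overline{k} = k$ by comparing with the $k$-regularity of $E$ via the composite embedding $E \hookrightarrow \mathcal{E}((T))$. The main obstacle, I expect, will be the surjectivity of $D_{\frak{P}} \hookrightarrow {\rm{Aut}}(\mathcal{E}/k(X))$ in the non-Galois case, which requires a careful lifting of automorphisms of $\mathcal{E}/k(X)$ through the Galois closure of $E$.
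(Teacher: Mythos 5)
There is a genuine gap, and it is located exactly where you flag it as ``delicate'': taking $\mathcal{E}$ to be a specialization $B/\mathfrak{P}$ of $E/k(X)(T)$ at $T=0$ cannot work, because specializing at a fixed value of $T$ preserves neither the automorphism group nor $k$-regularity, and hypotheses (1)--(5) give no control over the Galois closure $\widehat{E}$ at $T=0$ beyond non-ramification. Concretely, take $E=k(X,T,\sqrt{T^2+d})$ with $d\in k$ a non-square (char $k\neq 2$): this satisfies (1)--(4) with $G=\mathbb{Z}/2\mathbb{Z}$ and $F=k(\sqrt{d})$, yet its specialization at $T=0$ is $k(X,\sqrt{d})/k(X)$, which is not $k$-regular. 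Your regularity argument only yields $\mathcal{E}\cap\overline{k}\subseteq F((X))\cap\overline{k}=F$, which is vacuous since $F$ is itself algebraic over $k$; and the ``descent'' to $\mathcal{E}\cap\overline{k}=k$ via $E\hookrightarrow\mathcal{E}((T))$ is not available, precisely because $k$-regularity is not inherited by specializations. Likewise, there is no reason for the decomposition group $D_{\mathfrak{P}}$ to be all of $G$, nor for $D_{\mathfrak{P}}\rightarrow{\rm{Aut}}(B/\mathfrak{P}\,/\,k(X))$ to be onto (the specialization can acquire extra automorphisms, or become Galois when $E/k(X,T)$ is far from Galois), so the identity ${\rm{Aut}}(\mathcal{E}/k(X))=G$ also fails for this choice of $\mathcal{E}$.

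The paper's proof circumvents this by not specializing at $T=0$ at all: since $k(X)$ is Hilbertian, one chooses $t_0(X)\in k(X)$ such that the minimal polynomials $A(X,T,Y)$ of a primitive element of $E$ and $B(X,T,Y)$ of a primitive element of $\widehat{E}$ remain irreducible over $\overline{k}(X)$ after the substitution $T=t_0'(X):=X^N t_0(X)$. Irreducibility of $B(X,t_0'(X),Y)$ forces $[\widehat{E}_{t_0'(X)}:k(X)]=[\widehat{E}:k(X,T)]$, which by \S\ref{ssec:prelim_2} gives ${\rm{Aut}}(E_{t_0'(X)}/k(X))={\rm{Aut}}(E/k(X,T))=G$, and irreducibility of $A(X,t_0'(X),Y)$ over $\overline{k}(X)$ gives $E_{t_0'(X)}\cap\overline{k}=k$. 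The point of the factor $X^N$ is that $t_0'(X)$ is $X$-adically close to $0$, so Krasner's lemma (for $N$ large) identifies the completions at primes above $X$ of the $T=t_0'(X)$ specialization with those of the $T=0$ specialization, which is where hypotheses (4)/(5) enter to give conclusions (a)/(b). Without an ingredient of this kind (a Hilbert-irreducibility argument at a point close to, but different from, $T=0$), the two properties you defer cannot be established.
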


\begin{proof}
We first consider both cases in parallel, and make a case distinction in the last paragraph of the proof.

First, by (3) and as $k(X)$ is infinite, there is a primitive element of $E$ over $k(X)(T)$ which is integral over $k(X)[T]$ and whose minimal polynomial $A(X,T,Y) \in k(X)[T][Y]$ is such that $A(X,0,Y)$ is separable (see, e.g., \cite[corollaire 1.5.16]{Deb09}). Moreover, by (1), the polynomial $A(X,T,Y)$ is irreducible over $\overline{k}(X,T)$. In particular, $A(X, X^NT, Y)$ and $A(X, X^N T^{-1}, Y)$ are irreducible over $\overline{k}(X,T)$, for $N \geq 1$. We also let $\widehat{E}$ be the Galois closure of $E$ over $k(X,T)$, set $L = \widehat{E} \cap \overline{k}$, and let $B(X,T,Y) \in L(X, T)[Y]$ be the minimal polynomial of a primitive element of $\widehat{E}$ over $L(X,T)$. Note that $B(X,X^N T,Y)$ and $B(X, X^N T^{-1}, Y)$ are irreducible over $\overline{k}(X,T)$, for $N \geq 1$. At this stage, choose $N \geq 1$ arbitrary.

Then use either \cite[Proposition 13.2.1]{FJ08} if $k$ is infinite or \cite[Theorem 13.4.2 and Proposition 16.11.1]{FJ08} if $k$ is finite to get the existence of $t_0(X) \in k(X)$ such that 
$$A(X, X^Nt_0(X), Y), A(X, X^N t_0(X)^{-1}, Y) \in k(X)[Y],$$ 
$$B(X,X^N t_0(X),Y), B(X, X^N t_0(X)^{-1}, Y) \in L(X)[Y]$$ 
are irreducible over $\overline{k}(X)$ and separable. Up to replacing $t_0(X)$ by $t_0(X)^{-1}$, we may and will assume that $t_0(X)$ is of non-negative $X$-adic valuation. We then set $t'_0(X) = X^N t_0(X)$. 

Now, let $M$ be the field generated over $L(X)$ by a root of $B(X, t'_0(X), Y)$. As this last polynomial is irreducible over $L(X)$, we have $[M:k(X)] = [\widehat{E}:k(X,T)]$ and, therefore, $M/k(X)$ equals the specialization $\widehat{E}_{t'_0(X)}/k(X)$ of $\widehat{E}/k(X)(T)$ at $T = t'_0(X)$ (in particular, $\widehat{E}_{t'_0(X)}/k(X)$ is Galois). Then, by \S\ref{ssec:prelim_2}, the specialized field ${E}_{t'_0(X)}$ is a well-defined finite separable extension of $k(X)$ and ${\rm{Aut}}(E_{t'_0(X)}/k(X)) = {\rm{Aut}}(E/k(X,T))$, that is, ${\rm{Aut}}(E_{t'_0(X)}/k(X)) = G$ by (2). Furthermore, $E_{t'_0(X)}$ contains a root $y'$ of $A(X, t'_0(X), Y)$. As this last polynomial is irreducible over $k(X)$, we have $E_{t'_0(X)} = k(X, y').$ Then combine this last equality and the irreducibility of $A(X, t'_0(X), Y)$ over $\overline{k}(X)$ to get $E_{t'_0(X)} \cap \overline{k} = k.$ Finally, if $E/k(X,T)$ is additionally assumed to be Galois, i.e., if $E = \widehat{E}$, then we have $\widehat{E}_{t'_0(X)} = E_{t'_0(X)}$ and, hence, the specialization $E_{t'_0(X)}/k(X)$ is Galois.

Finally, we prove (a) or (b), depending on whether (4) or (5) holds. In both cases, set $A(X,0,Y) = A_1(X,Y) \cdots A_s(X,Y)$, where $A_i(X,Y)$ is irreducible over $k(X)$ ($1 \leq i \leq s$), and, for $i=1, \dots, s$, let $y_i$ be a root of $A_i(X,Y)$. Then $k(X,y_1)/k(X), \dots, k(X,y_s)/k(X)$ are exactly the specializations of $E/k(X,T)$ at $T=0$. First, assume (4) holds. Choosing $N$ sufficiently large, Krasner's lemma (see, e.g., \cite[Proposition 12.3]{Jar91}), applied to the separable polynomial $A(X,0,Y)$, yields that, for every root $y'$ of $A(X, t'_0(X), Y)$, there is a root $y$ of $A(X, 0, Y)$ with $k((X))(y') = k((X))(y)$. As there is a root $y'$ of $A(X, t'_0(X), Y)$ with $E_{t'_0(X)} = k(X)(y')$, we get from (4) that the completion of $E_{t'_0(X)}$ at every prime ideal containing $X$ equals $F((X))$, as needed for (a). Now, assume (5) holds. As before, Krasner's lemma, applied to the separable polynomial $A(X,0,Y)$, yields that, for every root $y$ of $A(X, 0, Y)$, there is a root $y''$ of $A(X, t'_0(X), Y)$ with $k((X))(y'') = k((X))(y)$. By (5), there is a root $y$ of $A(X, 0, Y)$ with $k((X))(y) = F((X))$. Pick a root $y''$ of $A(X, t'_0(X), Y)$ with $k((X))(y'') = k((X))(y)$. Then the completion at some prime ideal containing $X$ of $k(X,y'')/k(X)$ equals $F((X))/k((X))$. Finally, as $k(X,y'')$ and $k(X,y') = E_{t'_0(X)}$ are $k(X)$-isomorphic, we get that $k(X,y'')/k(X)$ is $k$-regular and has automorphism group $G$, as needed for (b).
\end{proof}

\begin{lemma} \label{lemma;1}
If {\rm{($*/k$)}} and {\rm{($**/k$)}} hold for every infinite field $k$, then {\rm{($*/k$)}} and {\rm{($**/k$)}} hold for every field $k$.
\end{lemma}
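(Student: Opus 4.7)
The plan is to reduce the general case to the case when $k$ is infinite by passing to the rational function field $k(X)$ and then descending via Lemma~\ref{lemma:0}. If $k$ is infinite there is nothing to prove, so I assume $k$ is finite, whence $k(X)$ is infinite.

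For ($*/k$), given a Galois extension $F/k$ with group $H$, I would form the scalar extension $F(X)/k(X)$, which remains Galois with group $H$ over the infinite field $k(X)$. Applying the hypothesis ($*/k(X)$) yields a $k(X)$-regular extension $E/k(X)(T)$ with ${\rm{Aut}}(E/k(X)(T)) = G$, $\langle T \rangle$ unramified in $E/k(X)(T)$, and every specialization at $T=0$ equal to $F(X)/k(X)$. I would then verify the hypotheses of Lemma~\ref{lemma:0}: (1) follows from $E \cap \overline{k} \subseteq E \cap \overline{k(X)} = k(X)$ together with $k(X) \cap \overline{k} = k$; (2) and (3) are immediate. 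For (4), the key computation is that since $F/k$ is finite separable, the integral closure of $k[X]$ in $F(X)$ is $F[X]$, so $\langle X \rangle$ lifts uniquely to $F[X]$, with residue field $F$ and $\langle X \rangle$-adic completion $F((X))$; hence the completion at every prime above $\langle X \rangle$ of every specialization of $E/k(X)(T)$ at $T=0$ equals $F((X))/k((X))$. Then Lemma~\ref{lemma:0}(a) produces a $k$-regular extension $\mathcal{E}/k(X)$ with ${\rm{Aut}}(\mathcal{E}/k(X)) = G$, $\langle X \rangle$ unramified, and every specialization at $X=0$ equal to $F/k$; relabeling $X$ as $T$ delivers ($*/k$).

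The argument for ($**/k$) runs in parallel: $F(X)/k(X)$ is finite separable with ${\rm{Aut}}(F(X)/k(X)) = G$, since any $k(X)$-automorphism of $F(X)$ must preserve the algebraic closure $F$ of $k(X)$ in $F(X)$ and thus restricts to an element of ${\rm{Aut}}(F/k)$. I would then invoke ($**/k(X)$) to obtain $E/k(X)(T)$, verify hypothesis (5) of Lemma~\ref{lemma:0} by the same completion computation as above, and apply Lemma~\ref{lemma:0}(b) to obtain the required $\mathcal{E}/k(X)$. The main obstacle here is verifying hypothesis (3) of Lemma~\ref{lemma:0}, because ($**/k(X)$) only guarantees a single unramified prime above $\langle T \rangle$ rather than full unramifiedness. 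I would address this either by noting that the construction in \S\ref{sec:proof} proving ($**/k(X)$) can be arranged to yield $\langle T \rangle$ globally unramified, or by refining Lemma~\ref{lemma:0}: its proof uses (3) only to ensure that the minimal polynomial of the chosen primitive element has a separable reduction at $T=0$, a condition that need only hold at the distinguished prime producing $F(X)/k(X)$ in order for the Krasner-lemma argument supporting conclusion (b) to go through.
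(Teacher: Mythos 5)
Your treatment of ($*/k$) is correct and is exactly the paper's argument: pass to $F(X)/k(X)$ over the infinite field $k(X)$, invoke ($*/k(X)$), check that the completion along $X$ of every specialization at $T=0$ equals $F((X))/k((X))$, and descend with Lemma \ref{lemma:0}(a).

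The gap is in your treatment of ($**/k$), and you have put your finger on it yourself: hypothesis (3) of Lemma \ref{lemma:0} demands that $\langle T\rangle$ be unramified in $E/k(X)(T)$, while ($**/k(X)$), used as a black box, only provides one unramified prime above $\langle T\rangle$. Neither of your two escape routes is actually carried out. The first (arranging the construction of \S\ref{sec:proof} to yield global unramifiedness) is not available inside this lemma, whose hypothesis is the bare statement ($**/k(X)$) and not any particular construction realizing it. The second (weakening hypothesis (3) of Lemma \ref{lemma:0}) is not routine: that hypothesis is what makes the single polynomial $A(X,0,Y)$ separable, a global condition governing all primes above $T=0$ simultaneously, and the Krasner step for conclusion (b) compares roots of $A(X,0,Y)$ with roots of $A(X,t_0'(X),Y)$, so one would need the irreducible factor attached to the distinguished prime to be separable and coprime to the remaining factors; you assert this can be done but do not do it. The paper avoids all of this with one observation you are missing: every finite extension of a finite field is Galois, so for finite $k$ the extension $F/k$ in ($**/k$) is automatically Galois with ${\rm{Gal}}(F/k)={\rm{Aut}}(F/k)=G$, and ($*/k$) with $H=G$ already implies ($**/k$). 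Hence only ($*/k$) needs to be transferred from $k(X)$ down to $k$, which is precisely the part of your argument that works.
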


\begin{proof}
First, as every finite field extension of a finite field is Galois, we have {\rm{($*/k$)}} $\Rightarrow$ {\rm{($**/k$)}} for every finite field $k$. Hence, to get the lemma, it suffices to show that, if {\rm{($*/k$)}} holds for all infinite fields $k$, then {\rm{($*/k$)}} holds for all fields $k$. To that end, let $k$ be a field, $H \leq G$ finite groups, and $F/k$ a Galois extension of group $H$. For an indeterminate $X$, the extension $F(X)/k(X)$ is Galois of group $H$. Hence, as $k(X)$ is infinite, we get from ($*/k(X)$) that there is a $k(X)$-regular extension $E/k(X)(T)$ with ${\rm{Aut}}(E/k(X,T))=G$, in which $\langle T \rangle$ is unramified, and every specialization at $T=0$ of which equals $F(X)/k(X)$. Then $E \cap \overline{k} = k$ and $F((X))/k((X))$ is the completion at every prime ideal containing $X$ of every specialization of $E/k(X)(T)$ at $T=0$. It then remains to apply Lemma \ref{lemma:0} to conclude.
\end{proof}

\subsection{Preliminary lemmas} \label{ssec:proof_2}

The proofs of {\rm{($*/k$)}} and {\rm{($**/k$)}} for $k$ infinite, which are given in \S\ref{ssec:proof_3}, require the next three lemmas. The first one is \cite[Proposition 2.3]{LP18}:

\begin{lemma} \label{lem:LP}
Given a field $k$ and $x \in k$, set $P_x(T,Y)=Y^3 + (T-x) Y + (T-x) \in k[T][Y].$ Then $P_x(T,Y)$ has Galois group $S_3$ over $k(T)$. Moreover, if $k_x$ is the field generated over $k(T)$ by any given root of $P_x(T,Y)$, then $k_{x_1} \not= k_{x_2}$ for $x_1 \not=x_2$ in $k$.
\end{lemma}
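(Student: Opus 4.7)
My plan is to separate the two assertions and handle each in turn. First, I will prove irreducibility of $P_x(T,Y)$: rewriting $P_x = (Y+1)T + (Y^3 - x(Y+1))$ as a degree-one polynomial in $T$ over $k[Y]$, the two coefficients $Y+1$ and $Y^3 - x(Y+1)$ are coprime in $k[Y]$ (evaluating the second at $Y=-1$ yields $-1$), so $P_x$ is irreducible in $k[T,Y]$, and by Gauss's lemma in $k(T)[Y]$. Hence the Galois group $G$ of $P_x$ over $k(T)$ is a transitive subgroup of $S_3$, so either $A_3$ or $S_3$; ruling out $A_3$ in every characteristic is the main obstacle, as the discriminant criterion is vacuous in characteristic $2$.

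To exclude $A_3$ in a characteristic-free manner, I will argue that $k_x/k(T)$ is not Galois by exhibiting two places of $k_x$ above $T=\infty$ with distinct ramification indices. In the uniformiser $u = 1/T$, the Newton polygon of $P_x$ has vertices $(0,-1), (1,-1), (3,0)$ and slopes $0$ and $1/2$. The slope-$0$ segment of length one corresponds to a root $y \to -1$ and gives an unramified place with $e=1$, while the slope-$1/2$ segment of length two yields a pair of roots of valuation $-1/2$ generating an irreducible separable quadratic factor of $P_x$ over $k((u))$, hence a single place with $e=2$ and $f=1$. In characteristic $2$ the latter quadratic factor becomes an Artin--Schreier equation with a simple pole, still yielding $e=2$. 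Since a Galois extension must have equal ramification indices at all places above a given place of the base, $k_x/k(T)$ cannot be Galois, forcing $G = S_3$.

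For the injectivity claim $k_{x_1} \neq k_{x_2}$ when $x_1 \neq x_2$, the strategy is to recover $x$ from $k_x/k(T)$ as the unique finite place of $k(T)$ that is totally ramified with $e=3$. Above $T-x$, using $y^3 = -(T-x)(y+1)$ and the reduction $P_x(x,Y) = Y^3$, any place $\mathfrak{p}$ of $k_x$ has $v_\mathfrak{p}(y) > 0$, so $v_\mathfrak{p}(y+1) = 0$ and $3 v_\mathfrak{p}(y) = e$; combined with $e \le 3$ this yields $e=3$, $f=1$, and a unique such $\mathfrak{p}$. Conversely, for $c \neq x$ in $k$, the polynomial $P_x(c,Y) = Y^3 + (c-x)Y + (c-x)$ is never a cube of a linear polynomial in $k[Y]$ (matching coefficients of $(Y-r)^3$ forces $c=x$), so it decomposes in $k[Y]$ either into pairwise coprime irreducibles or as $(Y-r)^2(Y-s)$ with $r \neq s$. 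Hensel's lemma applied to this coprime factorisation lifts it over $k((T-c))$ and produces only primes of $k_x$ above $T-c$ with ramification index at most $2$. Thus $T-x$ is intrinsic to $k_x/k(T)$, so $k_{x_1} = k_{x_2}$ forces $T-x_1 = T-x_2$ and hence $x_1 = x_2$.
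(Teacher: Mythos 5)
The paper does not actually prove this lemma: it is imported as \cite[Proposition 2.3]{LP18}, so there is no in-text argument to compare against, and your proposal should be judged as a self-contained substitute. As such it is correct, and it is a genuinely workable route. The irreducibility step (writing $P_x=(Y+1)T+(Y^3-x(Y+1))$ as a degree-one polynomial in $T$ with coprime coefficients, then Gauss) is fine. Your way of excluding $A_3$ --- exhibiting two places of $k_x$ above $T=\infty$ with ramification indices $1$ and $2$ read off the Newton polygon with vertices $(0,-1),(1,-1),(3,0)$ --- is uniform in all characteristics and avoids the discriminant criterion, which would need a separate treatment in characteristic $2$; note that your aside about an Artin--Schreier equation in characteristic $2$ is superfluous, since the two roots of the second segment have valuation $-1/2\notin\mathbb{Z}$, which already forces $e=2$ whatever the characteristic. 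The recovery of $x$ as the unique $c\in k$ for which $\langle T-c\rangle$ is totally ramified in $k_x$ is also sound: above $T=x$, integrality gives $v_{\mathfrak{p}}(y)\geq 0$, the relation $y^3=-(T-x)(y+1)$ then forces $v_{\mathfrak{p}}(y)>0$, hence $v_{\mathfrak{p}}(y+1)=0$ and $3\mid e$, so $e=3$; and for $c\neq x$ your case analysis of $P_x(c,Y)$ (never a perfect cube, hence either separable or of the form $(Y-r)^2(Y-s)$ with $r\neq s$) plus Hensel's lemma bounds every ramification index above $T-c$ by $2$. Two small points you should make explicit, neither of which is a gap: (i) separability of $P_x$ over $k(T)$ is needed before you may call its Galois group a transitive subgroup of $S_3$; it is immediate because $P_x$ is irreducible and $P_x'=3Y^2+(T-x)$ is nonzero in every characteristic. (ii) The invariant ``set of $c\in k$ with $\langle T-c\rangle$ totally ramified'' depends only on the field $k_x$ (indeed only on its $k(T)$-isomorphism class), not on the chosen root of $P_x$, which is exactly what lets you deduce $x_1=x_2$ from $k_{x_1}=k_{x_2}$.
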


Our second lemma is more or less known to experts, and is a slight generalization of the positive answer to the Beckmann--Black problem for symmetric groups over arbitrary fields:

\begin{lemma} \label{lem:s_n}
Let $k$ be a field, $F/k$ a finite Galois field extension, and $k \subseteq L \subseteq F$ an intermediate field whose Galois closure over $k$ equals $F$. Then, given $n \geq [L:k]$, there is a $k$-regular Galois field extension $E/k(T)$ with Galois group $S_n$, in which $\langle T \rangle$ is unramified, and such that $F/k$ equals the specialization $E_0/k$ of $E/k(T)$ at 0.
\end{lemma}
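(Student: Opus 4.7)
The plan is to realize $E/k(T)$ as the splitting field over $k(T)$ of an explicit polynomial of the shape $\tilde{p}(Y) - T \in k[T,Y]$, where $\tilde{p}(Y) \in k[Y]$ is a carefully chosen monic separable polynomial of degree $n$ whose splitting field over $k$ is $F$. By Lemma \ref{lemma;1}, I may and do assume $k$ is infinite. To build $\tilde{p}$, let $\alpha \in L$ be a primitive element of $L/k$ with minimal polynomial $p(Y) \in k[Y]$: since $F$ is the Galois closure of $L/k$, the splitting field of $p$ over $k$ is $F$ and $\deg p = d = [L:k]$. I would pick pairwise distinct $c_1, \dots, c_{n-d} \in k$, none of them being a root of $p$, and set $\tilde{p}(Y) = p(Y) \prod_{j=1}^{n-d} (Y-c_j) \in k[Y]$; this is a separable polynomial of degree $n$ whose splitting field over $k$ is still $F$. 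Using that $k$ is infinite, I would further require $(c_1,\dots,c_{n-d})$ to lie in a suitable non-empty Zariski-open subset of $\mathbb{A}^{n-d}(k)$ so that $\tilde{p}$ is a Morse polynomial with nonzero critical values: $\tilde{p}'$ is separable, the critical values $\tilde{p}(\beta)$ (as $\beta$ runs over the roots of $\tilde{p}'$) are pairwise distinct, and none of them is $0$.

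Let $E$ be the splitting field of $P(T,Y) := \tilde{p}(Y) - T$ over $k(T)$. Since $0$ is not a critical value of $\tilde{p}$, the specialization $P(0,Y) = \tilde{p}(Y)$ is separable, so $\langle T\rangle$ is unramified in any degree-$n$ subextension $k(T)(y)/k(T)$ of $E/k(T)$, and hence in $E/k(T)$ itself. To identify the Galois group I would compute the geometric monodromy of the cover $\mathbb{A}^1 \to \mathbb{A}^1$, $y\mapsto \tilde{p}(y)$: by the Morse property, the local inertia at each of the $n-1$ geometric critical values is a simple transposition, and the monodromy group is transitive because the source $\mathbb{A}^1$ is connected; a transitive subgroup of $S_n$ generated by $n-1$ transpositions has associated graph a spanning tree, so it equals $S_n$, giving ${\rm{Gal}}(E/\overline{k}(T)) = S_n$. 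Since ${\rm{Gal}}(E/k(T))$ also embeds in $S_n$ via its action on the roots of $P$, the arithmetic and geometric Galois groups coincide, yielding ${\rm{Gal}}(E/k(T)) = S_n$ together with the $k$-regularity $E \cap \overline{k} = k$. Finally, since $\langle T\rangle$ is unramified in $E/k(T)$, the specialization $E_0/k$ is, by the results recalled in \S\ref{ssec:prelim_2} applied to the subextension $k(T)(y) \subseteq E$, the compositum over $k$ of the Galois closures of the residue fields $k(\alpha_i)/k$ and $k(c_j)/k$; this equals $F$, since the $k(c_j) = k$ contribute nothing while the $k(\alpha_i)=L$ together generate $F$.

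The main obstacle I foresee is the monodromy computation in positive characteristic. The Morse argument is transparent in characteristic zero and works unchanged whenever ${\rm{char}}(k)$ is coprime to $2n$, so that the ramification of $\tilde{p}$ is everywhere tame with the expected cycle types (at $\infty$ and at each critical value). In small characteristics, wild ramification at $\infty$ may break the argument, and one then has to replace $\tilde{p}(Y) - T$ by a Mestre-style deformation $\tilde{p}(Y) - T\, R(Y)$ with an auxiliary $R(Y) \in k[Y]$ chosen so that the associated cover is tame at $\infty$ with cycle type $(n)$ while the specialization at $T=0$ remains $\tilde{p}(Y)$, after which the same monodromy and specialization arguments go through.
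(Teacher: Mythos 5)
Your construction is a genuinely different route from the paper's, and in characteristic zero it is essentially correct: the Morse polynomial $\tilde{p}(Y)-T$ does give geometric monodromy $S_n$, equality of arithmetic and geometric groups then yields both the group and $k$-regularity, and your identification of $E_0$ as the compositum of the Galois closures of the residue fields $L$ and $k$ is exactly the mechanism recalled in \S\ref{ssec:prelim_2}. However, the lemma is stated for an arbitrary field, and the positive-characteristic case is a genuine gap, not a detail. Your monodromy argument needs ${\rm{char}}(k)\nmid n$ (so that $\tilde{p}'$ has degree $n-1$ and the inertia at infinity is a tame $n$-cycle) and ${\rm{char}}(k)\neq 2$ (so that the inertia at each critical value is a tame transposition); moreover, even in the tame case, the clean count ``$n-1$ transpositions generating a transitive group'' uses the product-one relation available only in characteristic zero --- in characteristic $p$ the group is generated by the transpositions \emph{together with} the inertia at infinity, and transitivity of the subgroup generated by the transpositions alone is not automatic. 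Your proposed repair (a Mestre-style deformation $\tilde{p}(Y)-TR(Y)$) is only named, not carried out, and it is not clear it can work as described: in characteristic $2$, ramification of index $2$ is necessarily wild, so no choice of $R$ restores the transposition-inertia picture, and a structurally different argument is required. Note also that reducing to $k$ infinite does not help here (an infinite field of characteristic $2$ is still problematic), and the reduction should in any case go through Lemma \ref{lemma:0} rather than Lemma \ref{lemma;1}, which concerns the statements ($*/k$) and ($**/k$), not this lemma.

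It is worth seeing how the paper avoids all of this. It first reduces to $k$ Hilbertian (by passing from $k$ to $k(X)$ and descending with Lemma \ref{lemma:0} via Krasner), and then never computes a monodromy group: it interpolates a single monic degree-$n$ polynomial $R(T,Y)$ through three prescribed fibers --- $R(0,Y)$ with splitting field $F$ (giving the specialization), $R(1,Y)$ totally split over $k$ (forcing $E\subseteq k((T-1))$, hence $k$-regularity), and $R(a,Y)=Q(Y)$ a polynomial with Galois group $S_n$ over $k$, whose existence is exactly Hilbertianity (forcing ${\rm{Gal}}(E/k(T))=S_n$ from below). This is characteristic-free and replaces the hard geometric input of your approach by the soft existence of one $S_n$-polynomial over a Hilbertian field. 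If you want to salvage your argument, you would either need to restrict to ${\rm{char}}(k)\nmid 2n$ and supply the tame-case group theory, or import the interpolation/Hilbertianity device for the remaining characteristics.
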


\begin{proof}
First, assume $k$ is Hilbertian. Let $x$ be a primitive element of $L$ over $k$, and let $P(Y) \in k[Y]$ be the minimal polynomial of $x$ over $k$. Since $k$ is Hilbertian, it is, in particular, infinite, and we may then find $\alpha_1, \dots, \alpha_{n-[L:k]} \in k$ such that $P_0(Y) = (Y- \alpha_1) \cdots (Y- \alpha_{n - [L:k]}) P(Y)$ is separable. Using once more that $k$ is infinite, there are $\beta_1, \dots, \beta_n \in k$ such that $P_1(Y) = (Y - \beta_1) \cdots (Y- \beta_n)$ is separable. Finally, since $k$ is Hilbertian, there is a monic degree $n$ polynomial $Q(Y) \in k[Y]$ with Galois group $S_n$ over $k$; note that $Q(Y)$ is necessarily separable.

Pick $a$ in $k \setminus \{0,1\}$. Then, by polynomial interpolation, there is a monic degree $n$ polynomial $R(T, Y) \in k[T][Y]$ with $R(i, Y) = P_i(Y)$ for $i=0,1$, and with $R(a,Y) = Q(Y)$. Let $E$ be the splitting field over $k(T)$ of $R(T,Y)$. For $i=0,1$, since $R(i,Y)$ is separable, $\langle T - i \rangle$ is unramified in $E/k(T)$ and the specialized field $E_i$ equals the splitting field of $R(i,Y)$ over $k$. For the choice $i=0$, we get that $\langle T \rangle$ is unramified in $E/k(T)$, and that $E_0/k=F/k$. For the choice $i=1$, we get that $E$ embeds into $k((T))$; in particular, $E/k(T)$ is $k$-regular. Moreover, in the same way, the specialized field $E_a$ equals the splitting field of $R(a,Y)$ over $k$. Hence, $E_a/k$ has Galois group $S_n$, thus showing that $E/k(T)$ also has Galois group $S_n$.

We now prove the lemma. Given $n \geq [L:k]$, note that $F(X)/k(X)$ is finite Galois, that $F(X)$ is the Galois closure of $L(X)$ over $k(X)$, and that $n \geq [L(X):k(X)]$. As $k(X)$ is Hilbertian (see the proof of Lemma \ref{lemma:0} for references), the two paragraphs above yield a $k(X)$-regular Galois field extension $E/k(X)(T)$ of group $S_n$, in which $\langle T \rangle$ is unramified, and such that $F(X)/k(X)$ is the specialization of $E/k(X)(T)$ at $T=0$. In particular, $E \cap \overline{k}=k$ and the completion at every prime ideal containing $X$ of every specialization of $E/k(X)(T)$ at $T=0$ equals $F((X))/k((X))$. It then remains to apply Lemma \ref{lemma:0} to conclude. 
\end{proof}

Finally, we have the following elementary lemma about trinomials of degree 3:

\begin{lemma} \label{lem:trinomial}
Let $k$ be an infinite field. There exists $a \in k$ such that $Y^3 + a Y + a$ is separable and such that the splitting field over $k$ of this polynomial equals $k$.
\end{lemma}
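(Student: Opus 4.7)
The plan is to construct $a \in k$ by prescribing three distinct elements $r_1, r_2, r_3 \in k$ that will serve as the roots of $Y^3 + aY + a$. By Vieta's formulas, this amounts to finding pairwise distinct $r_1, r_2, r_3 \in k$ such that $r_1 + r_2 + r_3 = 0$ and $r_1 r_2 + r_1 r_3 + r_2 r_3 = -r_1 r_2 r_3$ (the common value being the desired $a$). Substituting $r_3 = -(r_1 + r_2)$ reduces the whole problem to a single plane cubic equation in $r_1$ and $r_2$:
\[
r_1^2 + r_1 r_2 + r_2^2 + r_1 r_2 (r_1 + r_2) = 0.
\]

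The key observation is that this cubic has a double point at the origin, and is therefore rational; concretely, it can be parameterized by lines through $(0,0)$. Setting $r_2 = t r_1$ and dividing by $r_1^2$ yields a single linear equation in $r_1$, which I would solve to obtain
\[
r_1 = -\frac{1+t+t^2}{t(1+t)}, \quad r_2 = t r_1, \quad r_3 = -(1+t) r_1,
\]
and then $a = -(1+t+t^2)^3 / (t^2 (1+t)^2)$, all expressed as rational functions of a single parameter $t$. This gives an infinite family of candidate values of $a$.

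It remains to check that, for some $t \in k$, the resulting triple consists of three distinct elements, so that $Y^3 + aY + a$ is separable and splits completely over $k$. The constraints are that $t(1+t)(1+t+t^2) \neq 0$ (for the formulas to be defined and for $a \neq 0$) and that $t \notin \{1, -2, -1/2\}$ (corresponding, respectively, to $r_1 = r_2$, $r_1 = r_3$, $r_2 = r_3$). Together these exclude only finitely many values of $t$, so, since $k$ is infinite, a suitable $t$ exists and the resulting $a$ proves the lemma. The one point requiring minor care is the possibility that some of the excluded values become undefined or coincide in small characteristic (for instance $-1/2$ when $\mathrm{char}(k) = 2$, or $-2 = 1$ when $\mathrm{char}(k) = 3$); but such a collapse can only shrink the forbidden set, so the conclusion still follows.
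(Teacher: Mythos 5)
Your proposal is correct and arrives at exactly the construction in the paper: your parameterization $r_1 = -\tfrac{1+t+t^2}{t(1+t)}$, $r_2 = tr_1$, $r_3 = -(1+t)r_1$ and the resulting $a = -\tfrac{(1+t+t^2)^3}{t^2(1+t)^2}$ coincide with the paper's $\beta$, $\beta\alpha$, $\beta(-1-\alpha)$ and its displayed coefficient (with $t=\alpha$), and your excluded set of parameter values matches the paper's conditions on $\alpha$. The only difference is presentational: the paper states the formulas and verifies them directly, whereas you derive them by projecting the singular cubic from its double point.
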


\begin{proof}
We fix $\alpha \in k$ fulfilling these conditions: $\alpha \not \in \{0,1,-1, -2\}$, $\alpha \not= -1/2$ if the characteristic of $k$ does not equal 2, and $\alpha^2 + \alpha + 1 \not=0$; such $\alpha$ exists as $k$ is infinite. Then 
$$\beta = \frac{-\alpha^2 - \alpha - 1}{\alpha^2 + \alpha}$$
is a well-defined non-zero element of $k$ and all the roots of the polynomial
$$P(Y) = (Y - \beta)(Y - \beta \alpha)(Y - \beta(-1-\alpha)) \in k[Y]$$
are in $k$. Moreover, the roots of $P(Y)$ are pairwise distinct, and we have
$$P(Y)= Y^3 + \frac{(-\alpha^2 - \alpha - 1)^3}{(\alpha^2 + \alpha)^2} Y + \frac{(-\alpha^2 - \alpha - 1)^3}{(\alpha^2 + \alpha)^2},$$
thus ending the proof of the lemma.
\end{proof}

\subsection{Proofs of {\rm{($*/k$)}} and {\rm{($**/k$)}} for $k$ infinite} \label{ssec:proof_3}

To prove Theorem \ref{thm:intro_1}, it suffices, by Lemma \ref{lemma;1}, to prove {\rm{($*/k$)}} and {\rm{($**/k$)}} for $k$ infinite. We prove both statements in parallel. Let $k$ be an infinite field, $H \leq G$ finite groups, and $F/k$ a finite separable field extension of automorphism group $H$. Assume $F/k$ Galois or $H=G$.

\begin{lemma} \label{lemma:BB2}
Given an indeterminate $X$, there is a $k$-regular Galois extension $M/k(X)$ and intermediate fields $k(X) \subseteq L \subseteq E \subseteq M$ with the following properties:

\vspace{0.5mm}

\noindent
{\rm{(1)}} $\langle X \rangle$ is unramified in $M/k(X)$,

\vspace{0.5mm}

\noindent
{\rm{(2)}} ${\rm{Aut}}(E/L) = G$,

\vspace{0.5mm}

\noindent
{\rm{(3)}} if $F/k$ is Galois, then $E/k(X)$ is Galois and $F/k$ is the specialization of $E/k(X)$ at $0$, 

\vspace{0.5mm}

\noindent
{\rm{(4)}} if $H=G$, then $[E:L] = [F:k]$ and there is a prime ideal $\frak{P}$ of the integral closure of $k[X]$ in $E$ containing $X$ such that the residue field of $E$ at $\frak{P}$ equals $F$.
\end{lemma}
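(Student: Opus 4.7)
The plan is to derive Lemma~\ref{lemma:BB2} from Lemma~\ref{lem:s_n} alone by a purely group-theoretic manipulation of fixed fields inside one auxiliary Galois $S_n$-extension of $k(X)$; the rigid $S_3$-trinomials of Lemmas~\ref{lem:LP} and~\ref{lem:trinomial} should not be needed at this stage.

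First, I would set up the group-theoretic dictionary. Let $\widehat F/k$ be the Galois closure of $F/k$, write $\widehat G = \mathrm{Gal}(\widehat F/k)$, and let $H_0 \leq \widehat G$ be the subgroup with $F = \widehat F^{H_0}$. In the Galois case $\widehat F = F$ and $H_0 = \{1\}$; in the case $H = G$, the assumption $\mathrm{Aut}(F/k) = G$ translates via the usual description of $\mathrm{Aut}(F/k)$ into $N_{\widehat G}(H_0)/H_0 = G$. I would then pick $n \geq \max\{[F:k], |G|\}$ so that $\widehat G$ and (in the Galois case) a copy of $G$ both embed in $S_n$, and apply Lemma~\ref{lem:s_n} to the Galois extension $\widehat F/k$, to the intermediate field $F$ (whose Galois closure over $k$ is $\widehat F$ by definition), and to this $n$. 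This yields a $k$-regular Galois extension $M/k(X)$ with $\mathrm{Gal}(M/k(X)) = S_n$, unramified at $\langle X \rangle$, and with $M_0/k = \widehat F/k$.

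Next, I would fix a prime $\frak Q$ of the integral closure of $k[X]$ in $M$ containing $X$ and let $D \leq S_n$ be its decomposition group. Since $\frak Q$ is unramified and $M_{\frak Q}/k = \widehat F/k$ is Galois with group $\widehat G$, the map~\eqref{eq:psi} yields an isomorphism $\psi : D \to \widehat G$. In the Galois case I would simply set $E = M$ and $L = M^{\iota(G)}$, for any chosen embedding $\iota : G \hookrightarrow S_n$: then $E/L$ is Galois with $\mathrm{Aut}(E/L) = \iota(G) \cong G$, while $E/k(X)$ is Galois and has specialization $M_0 = F$ at $0$, giving (1), (2) and (3) (and also (4) when $H = G$, with $\frak P = \frak Q$). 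In the case $H = G$ with $F/k$ not necessarily Galois, I would instead identify $H_0 \leq \widehat G$ with $\psi^{-1}(H_0) \leq D \leq S_n$ and set $L = M^D$, $E = M^{H_0}$. The inclusion $H_0 \leq D$ gives $k(X) \subseteq L \subseteq E \subseteq M$; the standard formula applied to the Galois extension $M/L$ (with group $D$) yields $\mathrm{Aut}(E/L) = N_D(H_0)/H_0 = N_{\widehat G}(H_0)/H_0 = G$ and $[E:L] = [D:H_0] = [\widehat G:H_0] = [F:k]$. Finally, $\frak P = \frak Q \cap E$ is unramified in $E/k(X)$ and has residue field $M_{\frak Q}^{H_0} = \widehat F^{H_0} = F$, as required for~(4).

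The main point requiring care is to specify $H_0$ not merely as an abstract subgroup of $\widehat G$ but as a concrete subgroup of $S_n$, via the residue isomorphism $\psi$ attached to the particular prime $\frak Q$ fixed at the outset, so that the residue field of $E = M^{H_0}$ at $\frak Q \cap E$ comes out exactly as $F$ rather than as some $\widehat G$-conjugate of it; once this identification is made, all the remaining equalities and indices follow from standard Galois-theoretic bookkeeping.
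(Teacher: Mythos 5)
Your proposal is correct and follows essentially the same route as the paper: apply Lemma~\ref{lem:s_n} to $\widehat F/k$ (resp.\ to $F/k$ in the Galois case), fix an unramified prime $\frak Q$ above $X$, use the isomorphism $\psi$ of \eqref{eq:psi} to transport ${\rm Gal}(\widehat F/F)$ into the decomposition group, and take $L$, $E$ to be the corresponding fixed fields, with ${\rm{Aut}}(E/L)=N_{\widehat G}(H_0)/H_0=G$ and the residue field computation giving (4). The only cosmetic differences are your uniform choice of $n$ and your explicit remark that (4) also holds in the overlapping case where $F/k$ is Galois and $H=G$; the paper's proof is otherwise identical in substance.
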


\begin{proof}
First, assume $F/k$ is Galois. Given $n \geq |G|$, let $M/k(X)$ be a $k$-regular Galois extension of group $S_n$, in which $\langle X \rangle$ is unramified, and such that the specialization $M_0/k$ of $M/k(X)$ at 0 equals $F/k$; such an extension exists by Lemma \ref{lem:s_n}. Then (2) and (3) hold with $E=M$ and $L$ equal to the fixed field of $G$ in $M$.

Now, suppose $H=G$. Let $\widehat{F}$ be the Galois closure of $F$ over $k$ and set $\widehat{H} = {\rm{Gal}}(\widehat{F}/k)$.  Given $n \geq |\widehat{H}|$, Lemma \ref{lem:s_n} yields a $k$-regular Galois extension $M/k(X)$ of group $S_n$, in which $\langle X \rangle$ is unramified, and with $M_0= \widehat{F}$. Let $\frak{Q}$ be a prime ideal of the integral closure of $k[X]$ in $M$ containing $X$, let $D_\frak{Q}$ be the decomposition group of $M/k(X)$ at $\frak{Q}$, let $\psi : D_\frak{Q} \rightarrow {\rm{Gal}}(\widehat{F}/k)$ be as in \eqref{eq:psi}, and let $L$ be the fixed field of $D_\frak{Q}$ in $M$. Note that the residue field of $L$ at the restriction of $\frak{Q}$ equals $k$. Setting $K = {\rm{Gal}}(\widehat{F} / F)$, let $E$ be the fixed field of $\psi^{-1}(K)$ in $M$. Then $[E:L] = [F:k]$, the residue field of $E$ at the restriction $\frak{P}$ of $\frak{Q}$ equals $F$, and ${\rm{Aut}}(E/L) = {\rm{Aut}}(F/k) =H$. As $H=G$, we get ${\rm{Aut}}(E/L) = G$, as needed.
\end{proof}

Fix a $k$-regular Galois extension $M/k(X)$ and intermediate fields $k(X) \subseteq L \subseteq E \subseteq M$ as in Lemma \ref{lemma:BB2}, and let $B$ be the integral closure of $k[X]$ in $E$. If $F/k$ is Galois, we arbitrarily fix a prime ideal $\frak{P}$ of $B$ containing $X$. Note that, by (1) and (3) in Lemma \ref{lemma:BB2}, the completion $E_\frak{P}$ of $E$ at $\frak{P}$ equals $F((X))$. If $H=G$, we fix a prime ideal $\frak{P}$ of $B$ containing $X$ as in Lemma \ref{lemma:BB2} and, by (1) and (4) in this last lemma, we also have $E_\frak{P} = F((X))$. Moreover, the infiniteness of $k$ and Lemma \ref{lem:trinomial} yield an element $a$ of $k$ such that $Y^3 + a Y + a$ is separable and such that the splitting field over $k$ of this last polynomial equals $k$ (in both cases).

Now, let $x_0$ be a primitive element of $L$ over $k(X)$, assumed to be integral over $k[X]$. By Krasner's lemma and as $L \subseteq F((X))$, there exists a positive integer $m$ such that the splitting 

\noindent
fields over $F((X))$ of the separable polynomials
$$Y^3 + (a- x_0 X^m)Y + (a-x_0 X^m) \, \, \, {\rm{and}} \, \, \, Y^3+aY+a$$
coincide. Set $x = x_0 X^m$. Then $x$ is a primitive element of $L$ over $k(X)$. Moreover, by the definition of $a$, the splitting field over $k$ of $Y^3+aY+a$ equals $k$. Hence, the splitting field over $F((X))$ of this last polynomial equals $F((X))$ and, consequently, all roots of $Y^3 + (a-x) Y + (a-x)$ are elements of $F((X))$.

Finally, let $T$ be an indeterminate, let $y$ be a root of 
$$P_x(T,Y) = Y^3 + (T-x) Y + (T-x) \in L[T][Y],$$
and let $\mathcal{E} = E(T,y)$. By Lemma \ref{lem:LP}, we have $[\mathcal{E} : E(T)] = 3 = [L(T,y) : L(T)]$. Moreover, letting $K$ denote the Galois closure of $E$ over $L$, the same lemma yields $[K(T)(y):K(T)]=3$. Hence, $K(T)$ and $L(T,y)$ are linearly disjoint over $L(T)$. As $K(T)$ is the Galois closure of $E(T)$ over $L(T)$, we get ${\rm{Aut}}(E(T,y) / L(T,y)) = {\rm{Aut}}(E(T)/ L(T)) = G$.

The following diagram of field extensions summarizes the construction:
\begin{figure}[h!]
{\tiny{\[ \xymatrix{
& & & & E(T,y) \ar@{-}[rd] \ar@{-} @/^1pc/[rd]^-{G} \ar@{=}[rr] & & \mathcal{E} \ar@{-}[dddd] \\
& & & E(T) \ar@{-}[rd] \ar@{-} @/^1pc/[rd]^{G} \ar@{-}[ru]^3 & & L(T,y) & \\
F \ar@{-}[dd] \ar@{-} @/_1pc/[dd]_{H} & & E \ar@{-}[ru] \ar@{-}[rd] \ar@{-}[dd] \ar@{-} @/^1pc/[rd]^{G} & & L(T) \ar@{-}[ru]^3 & & \\
& & \ar@{->}[ll]_{X=0 \, {\rm{(if}} \, F/k \, {\rm{is}} \, {\rm{Galois)}}}^{{\rm{mod}} \, \frak{P} \, {\rm{(if}} \, H=G{\rm{)}}} & L = k(X,x) \ar@{-}[ru] & & & \\
k & & k(X) \ar@{-}[ru] & & & &  k(X,T)
}
\]}}
\end{figure}

\begin{lemma} \label{aut_0}
{\rm{(a)}} We have ${\rm{Aut}}(\mathcal{E} / k(X,T)) = G$.

\vspace{0.5mm}

\noindent
{\rm{(b)}} We have $\mathcal{E} \cap \overline{k} = k$.
\end{lemma}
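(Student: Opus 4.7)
The overall plan is to first identify the algebraic closure of $k(X)$ in $\mathcal{E}$, then exploit Lemma \ref{lem:LP} to pin down both the automorphism group (for (a)) and the constants (for (b)).

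My first step would be to show that $E$ is algebraically closed in $\mathcal{E}=E(T,y)$. Since $x\in E\subseteq\overline{E}$, applying Lemma \ref{lem:LP} over $\overline{E}$ with parameter $x$ yields that $P_x(T,Y)=Y^3+(T-x)Y+(T-x)$ is irreducible of degree $3$ over $\overline{E}(T)$. Hence $[\overline{E}(T,y):\overline{E}(T)]=3=[E(T,y):E(T)]$, which forces $E$ to be algebraically closed in $\mathcal{E}$; since $E/k(X)$ is algebraic, it is also the algebraic closure of $k(X)$ in $\mathcal{E}$. Part (b) is then immediate: any $z\in\mathcal{E}\cap\overline{k}$ is algebraic over $k(X)$ and thus lies in $E$, and $E\cap\overline{k}=k$ because $E\subseteq M$ and $M/k(X)$ is $k$-regular.

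For part (a), the inclusion $G\subseteq{\rm{Aut}}(\mathcal{E}/k(X,T))$ is already in hand from the identification $G={\rm{Aut}}(E(T,y)/L(T,y))$ established just before the statement, since $L(T,y)\supseteq k(X,T)$. For the reverse inclusion, fix $\tau\in{\rm{Aut}}(\mathcal{E}/k(X,T))$. By the preceding step, $\tau$ restricts to an automorphism $\sigma:=\tau|_E$ of $E/k(X)$, and applying $\tau$ to $P_x(T,y)=0$ (using $\tau(T)=T$) shows that $\tau(y)$ is a root of $P_{\sigma(x)}(T,Y)$ lying in $\mathcal{E}$. The key claim is $\sigma(x)=x$: I would prove it by passing to the compositum $\overline{k(X)}\cdot\mathcal{E}$, which equals $\overline{k(X)}(T,y)$ because $E\subseteq\overline{k(X)}$, and which is a cubic extension of $\overline{k(X)}(T)$ by Lemma \ref{lem:LP} applied over $\overline{k(X)}$ with parameter $x$. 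Both $y$ and $\tau(y)$ lie in this cubic. If $\sigma(x)\neq x$, then Lemma \ref{lem:LP} over $\overline{k(X)}$ yields that $\overline{k(X)}(T,\tau(y))$ is a \emph{distinct} cubic extension of $\overline{k(X)}(T)$; two distinct cubic extensions of a common base can only intersect in that base, forcing $\tau(y)\in\overline{k(X)}(T)$, contrary to the cubic irreducibility of $P_{\sigma(x)}$. Hence $\sigma$ fixes $x$, so fixes $L=k(X)(x)$, and $\sigma\in{\rm{Aut}}(E/L)=G$.

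To finish (a), I would argue $\tau(y)=y$. Let $N$ denote the splitting field of $P_x$ over $L(T)$; Lemma \ref{lem:LP} applied over $\overline{L}$ makes $L$ algebraically closed in $N$, whence $E$ and $N$ are linearly disjoint over $L$, and $[E\cdot N:\mathcal{E}]=[E:L]\cdot 6/([E:L]\cdot 3)=2$. So $\mathcal{E}$ contains only the root $y$ of $P_x$, forcing $\tau(y)=y$. Thus $\tau$ is the unique extension of $\sigma\in G$ to $\mathcal{E}$ fixing $T$ and $y$, which is precisely the image of $\sigma$ under $G={\rm{Aut}}(\mathcal{E}/L(T,y))\hookrightarrow{\rm{Aut}}(\mathcal{E}/k(X,T))$; this yields ${\rm{Aut}}(\mathcal{E}/k(X,T))\subseteq G$ and completes (a). The main obstacle is the key claim $\sigma(x)=x$, which needs the geometric separation of the cubics $P_\alpha$ for distinct parameters $\alpha$; this is exactly what Lemma \ref{lem:LP} supplies, and the rest is careful tracking of degrees and linear disjointness.
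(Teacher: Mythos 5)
Your proof is correct and follows essentially the same strategy as the paper's: both arguments hinge entirely on Lemma \ref{lem:LP} --- the ``moreover'' clause to force an automorphism to fix $x$ (hence $L(T)$), the $S_3$/degree-$6$ count to force it to fix $y$, and the irreducibility of $P_x(T,Y)$ over the algebraic closure of the constants to get $\mathcal{E}\cap\overline{k}=k$. The only differences are cosmetic: you work over $\overline{k(X)}$ and use a prime-degree intersection argument where the paper works over the Galois closure $\widehat{E}$ of $E/k(X)$ and compares degrees of the strictly nested fields $E(T,\sigma(y))\subseteq E(T,y)$ directly.
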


\begin{proof}
(a) The proof is similar to that of \cite[Lemma 4.5]{FLP19}.

Firstly, we have ${\rm{Aut}}(\mathcal{E}/k(X,T)) = {\rm{Aut}}(\mathcal{E}/L(T))$. Indeed, let $\sigma$ be any element of the automorphism group ${\rm{Aut}}(\mathcal{E}/k(X,T))$. If $\sigma \not \in {\rm{Aut}}(\mathcal{E}/L(T))$, then $\sigma(x) \not= x$ (as $L = k(X,x)$), and $\sigma(y)$ is a root of 
$$P_{\sigma(x)}(T, Y)=Y^3 + (T-\sigma(x))Y + (T-\sigma(x))  \in \widehat{E}[T][Y],$$
where $\widehat{E}$ is the Galois closure of $E$ over $k(X)$. Lemma \ref{lem:LP} then gives $\widehat{E}(T, \sigma(y)) \not=\widehat{E}(T,y)$. Since $\widehat{E}$ is the compositum of the $k(X)$-conjugates of $E$, we get $E(T, \sigma(y)) \not= E(T,y)$. As $\sigma(\mathcal{E}) \subseteq \mathcal{E}$, we get that $E(T, \sigma(y))$ is strictly contained in ${E}(T,y)$. But Lemma \ref{lem:LP} yields 
$$[E(T,y) : E(T)]=3= [\widehat{E}(T,\sigma(y)) : \widehat{E}(T)] \leq [{E}(T,\sigma(y)) : E(T)].$$

Secondly, ${\rm{Aut}}(\mathcal{E}/L(T)) = {\rm{Aut}}(\mathcal{E}/L(T,y))$ ($=G$). Indeed, given $\sigma \in {\rm{Aut}}(\mathcal{E}/L(T))$, we have to show that $\sigma$ fixes $y$. Assume $\sigma$ does not. Then $\sigma(y)$ is another root of $P_{x}(T,Y)$ and it is in $\mathcal{E}$. Hence, $\mathcal{E}$ contains all the roots of $P_{x}(T,Y)$ (as this last polynomial has degree 3 in $Y$). By Lemma \ref{lem:LP}, we get $[\mathcal{E}:E(T)] = 6$, a contradiction.

\vspace{1mm}

\noindent
(b) As $M/k(X)$ is as in Lemma \ref{lemma:BB2}, we have $[E\overline{k}:\overline{k}(X)] = [E:k(X)]$, and so $[E\overline{k}(T) :\overline{k}(X,T)] = [E(T):k(X,T)]$. Moreover, by Lemma \ref{lem:LP}, we have $[E\overline{k}(T,y) : E \overline{k}(T)] = 3 = [E(T,y) : E(T)]$. Hence, $[\mathcal{E} \overline{k} : \overline{k}(X,T)] = [\mathcal{E} : k(X,T)]$, thus ending the proof.
\end{proof}

To conclude the proof of Theorem \ref{thm:intro_1}, it then suffices, by Lemmas \ref{lemma:0} and \ref{aut_0} to show the following three statements, where $a$ is introduced after the proof of Lemma \ref{lemma:BB2}:

\vspace{1mm}

\noindent
{\rm{(1)}} $\langle T -a \rangle$ is unramified in $\mathcal{E}/k(X,T)$,

\vspace{1mm}

\noindent
{\rm{(2)}} $F((X))/k((X))$ is the completion at every prime ideal containing $X$ of every specialization of $\mathcal{E}/k(X)(T)$ at $T=a$ (if $F/k$ is Galois),

\vspace{1mm}

\noindent
{\rm{(3)}} $F((X))/k((X))$ is the completion at some prime ideal containing $X$ of some specialization of $\mathcal{E}/k(X)(T)$ at $T=a$ (if $H=G$).

\begin{proof}[Proof of {\rm{(1)}}]
Clearly, $\langle T-a \rangle$ is unramified in $E(T)/k(X,T)$. Moreover, from our choice of $a$ and $x$, the polynomial $P_x(a,Y)$ is separable. Hence, $\langle T-a \rangle$ is unramified in $\widetilde{\mathcal{E}} / E(T)$, where $\widetilde{\mathcal{E}}$ denotes the splitting field over $E(T)$ of $P_x(T,Y)$. Combining the two unramified conclusions yields that $\langle T-a \rangle$ is unramified in $\mathcal{E}/k(X,T)$.
\end{proof}

\begin{proof}[Proofs of {\rm{(2)}} and {\rm{(3)}}]
We prove both statements in parallel. Since $P_x(a,Y)$ is separable, all spe\-cia\-lizations $\widetilde{\mathcal{E}}_{a,i}/k(X)$ of $\widetilde{\mathcal{E}}/k(X)(T)$ at $T=a$ coincide and $\widetilde{\mathcal{E}}_{a,i}$ is the splitting field over $E$ of $P_x(a,Y)$ for every $i$. For simplicity, we write $\widetilde{\mathcal{E}}_a/k(X)$ for ``the" specialization of $\widetilde{\mathcal{E}}/k(X)(T)$ at $T=a$. Considering the prime ideal $\frak{P}$ introduced after the proof of Lemma \ref{lemma:BB2}, recall that the completion $E_\frak{P}$ of $E$ at $\frak{P}$ equals $F((X))$. It then suffices to show that the completion $(\widetilde{\mathcal{E}}_a)_\frak{P}$ of $\widetilde{\mathcal{E}}_a$ at $\frak{P}$ also equals $F((X))$. But the field $(\widetilde{\mathcal{E}}_a)_\frak{P}$ is the splitting field over $E_\frak{P} = F((X))$ of $P_x(a,Y)$ and, as seen after the proof of Lemma \ref{lemma:BB2}, all roots of this last polynomial are in $F((X))$.
\end{proof}

\section{Proof of Theorem \ref{thm:intro_2}} \label{sec:proof_2}

We need the following variant for automorphism groups of Lemma \ref{lem:angelot}:

\begin{lemma} \label{angelot}
Let $H$ be a division ring of center $h$, let $\sigma$ be an automorphism of $H$ of finite order $n$, let $h^{\langle \sigma \rangle} = \{x \in h \, : \, \sigma(x) = x\}$, and let $e$ be a finite separable field extension of $h^{\langle \sigma \rangle}(T^n)$ whose Galois closure embeds into $h^{\langle \sigma \rangle}((T^n))$. Then the next three conclusions hold:

\vspace{0.5mm}

\noindent
{\rm{(1)}} $H(T, \sigma) \otimes_{h^{\langle \sigma \rangle}(T^n)} e$ is a division ring, which is finite over $H(T, \sigma)$,

\noindent
{\rm{(2)}} ${\rm{Aut}}((H(T, \sigma) \otimes_{h^{\langle \sigma \rangle}(T^n)} e)/H(T, \sigma)) = {\rm{Aut}}(e/h^{\langle \sigma \rangle}(T^n))$,

\vspace{0.5mm}

\noindent
{\rm{(3)}} $(H(T, \sigma) \otimes_{h^{\langle \sigma \rangle}(T^n)} e)/H(T, \sigma)$ is outer.
\end{lemma}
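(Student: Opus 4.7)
The overall strategy is to reduce to Lemma \ref{lem:angelot} by passing to the Galois closure $\widehat{e}$ of $e$ over $h^{\langle\sigma\rangle}(T^n)$. By hypothesis $\widehat{e} \subseteq h^{\langle\sigma\rangle}((T^n))$, so Lemma \ref{lem:angelot} yields that $\widehat{E} := H(T,\sigma) \otimes_{h^{\langle\sigma\rangle}(T^n)} \widehat{e}$ is a division ring, Galois and outer over $H(T,\sigma)$, with ${\rm{Gal}}(\widehat{E}/H(T,\sigma))$ identified via restriction with ${\rm{Gal}}(\widehat{e}/h^{\langle\sigma\rangle}(T^n))$. Setting $E := H(T,\sigma) \otimes_{h^{\langle\sigma\rangle}(T^n)} e$, the inclusion $e \hookrightarrow \widehat{e}$ is a split monomorphism of $h^{\langle\sigma\rangle}(T^n)$-vector spaces; tensoring with $H(T,\sigma)$ is exact (the ground structure is a field), so $E$ is naturally a subring of $\widehat{E}$ containing $H(T,\sigma)$.

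For (1), $E$ is a domain as a subring of the division ring $\widehat{E}$, and it is a left $H(T,\sigma)$-module of finite dimension $[e : h^{\langle\sigma\rangle}(T^n)]$. For any nonzero $r \in E$, right multiplication by $r$ is a left $H(T,\sigma)$-linear injective endomorphism of $E$, hence surjective by finite dimensionality; the resulting $x \in E$ with $xr = 1$ must coincide with $r^{-1}$ computed inside $\widehat{E}$, so $r^{-1} \in E$. Therefore $E$ is a division ring finite over $H(T,\sigma)$.

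For (2) and (3), the key preliminary computation is the centralizer identity $Z_{\widehat{E}}(H(T,\sigma)) = \widehat{e}$, which follows by expanding a general element of $\widehat{E}$ as $\sum_j \alpha_j (1 \otimes b_j)$ for $\alpha_j \in H(T,\sigma)$ and $\{b_j\}$ an $h^{\langle\sigma\rangle}(T^n)$-basis of $\widehat{e}$, then invoking the standard identification $Z(H(T,\sigma)) = h^{\langle\sigma\rangle}(T^n)$ together with the linear independence of the $b_j$ over $H(T,\sigma)$. Intersecting with $E$ gives $Z_E(H(T,\sigma)) = e$. Granted this, for (2) any $\rho \in {\rm{Aut}}(E/H(T,\sigma))$ preserves $e$ (the centralizer of $H(T,\sigma)$), so restriction yields a well-defined homomorphism ${\rm{Aut}}(E/H(T,\sigma)) \to {\rm{Aut}}(e/h^{\langle\sigma\rangle}(T^n))$, which is injective because $H(T,\sigma)$ and $e$ generate $E$ as a ring, and surjective via the explicit inverse $\tau \mapsto {\rm{id}}_{H(T,\sigma)} \otimes \tau$. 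For (3), an inner automorphism $c(\cdot) c^{-1}$ of $E$ fixing $H(T,\sigma)$ forces $c \in Z_E(H(T,\sigma)) = e$, which is commutative, so $c$ also commutes with $e$ and the automorphism fixes the generating set $H(T,\sigma) \cup e$ of $E$. The principal technical point is the centralizer identity together with the center computation for $H(T,\sigma)$; with these in hand, (1)--(3) follow routinely from the Galois case of Lemma \ref{lem:angelot} and standard module-theoretic arguments.
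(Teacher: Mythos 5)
Your reduction to Lemma \ref{lem:angelot} via the Galois closure $\widehat{e}$ is the same starting point as the paper's, and your argument for (1) (a domain that is finite-dimensional as a left $H(T,\sigma)$-module is a division ring) is correct. The gap is in your ``key preliminary computation'': the identity $Z(H(T,\sigma)) = h^{\langle \sigma \rangle}(T^n)$ is false in general. One always has $h^{\langle \sigma \rangle}(T^n) \subseteq Z(H(T,\sigma))$, but the center equals $h^{\langle \sigma \rangle}(u^{-1}T^r)$, where $r \leq n$ is the \emph{inner} order of $\sigma$ (the least $r$ with $\sigma^r = {\rm{conj}}_u$ inner and $\sigma(u)=u$); this is strictly larger than $h^{\langle \sigma \rangle}(T^n)$ whenever $r < n$. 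Concretely, for $H = \mathbb{H}$ the real quaternions and $\sigma = {\rm{conj}}_i$ (an automorphism of order $n=2$ fixing $h = \mathbb{R}$ pointwise), the element $i^{-1}T$ is central in $\mathbb{H}(T,\sigma)$ and satisfies $(i^{-1}T)^2 = -T^2$, so $Z(\mathbb{H}(T,\sigma)) = \mathbb{R}(i^{-1}T) \supsetneq \mathbb{R}(T^2)$. Consequently your expansion argument only yields $Z_{\widehat{E}}(H(T,\sigma)) = Z(H(T,\sigma)) \otimes_{h^{\langle \sigma \rangle}(T^n)} \widehat{e}$, which can strictly contain $\widehat{e}$.

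This matters because the well-definedness of the restriction map in (2) --- the claim that every $\rho \in {\rm{Aut}}(E/H(T,\sigma))$ maps $e$ into itself --- rests entirely on identifying $e$ with $Z_E(H(T,\sigma))$. When the centralizer is the larger field $Z(H(T,\sigma)) \otimes e$, an automorphism fixing $H(T,\sigma)$ a priori only permutes the roots, inside that larger field, of the minimal polynomial of a primitive element of $e/h^{\langle \sigma \rangle}(T^n)$, and could carry $e$ to a different conjugate; ruling this out is precisely the nontrivial content of (2). (Your argument for (3) survives, since the centralizer is still commutative and contains $e$; your injectivity and surjectivity arguments in (2) are also fine once well-definedness is granted.) The paper avoids any centralizer computation: it realizes $E$ as the fixed division ring of ${\rm{res}}^{-1}(K)$, $K = {\rm{Gal}}(\widehat{e}/e)$, inside the outer Galois extension $\widehat{E}/H(T,\sigma)$, invokes the non-commutative Galois correspondence \cite[Theorem 3.3.11]{Coh95} to get ${\rm{Aut}}(E/H(T,\sigma)) = N({\rm{res}}^{-1}(K))/{\rm{res}}^{-1}(K) \cong N(K)/K = {\rm{Aut}}(e/h^{\langle \sigma \rangle}(T^n))$, and only then identifies $E$ with $H(T,\sigma) \otimes_{h^{\langle \sigma \rangle}(T^n)} e$ by a degree count. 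To repair your proof you would need either to add the hypothesis that $n$ is also the inner order of $\sigma$ (automatic when $H$ is a field, but not assumed here), or to replace the centralizer step by an argument that automorphisms of $E/H(T,\sigma)$ extend to $\widehat{E}$ --- which is essentially the paper's route.
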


\begin{proof}[Proof of Lemma \ref{angelot}]
Let $\widehat{e}$ be the Galois closure of $e$ over $h^{\langle \sigma \rangle}(T^n)$. As ${\rm{Gal}}(\widehat{e} / h^{\langle \sigma \rangle}(T^n))$ is finite and $\widehat{e} \subseteq h^{\langle \sigma \rangle}((T^n))$, Lemma \ref{lem:angelot} gives that $\widehat{E} = H(T, \sigma) \otimes_{h^{\langle \sigma \rangle}(T^n)} \widehat{e}$ is a Galois outer extension of $H(T, \sigma)$ and that the restriction map ${\rm{res}} : {\rm{Gal}}(\widehat{E} / H(T, \sigma)) \rightarrow {\rm{Gal}}(\widehat{e} / h^{\langle \sigma \rangle}(T^n))$ is a well-defined isomorphism. Set $K = {\rm{Gal}}(\widehat{e} / e)$ and let $E$ be the fixed division ring of ${\rm{res}}^{-1}(K)$ in $\widehat{E}$. As $H(T, \sigma) \subseteq E \subseteq \widehat{E}$ and $\widehat{E}/H(T, \sigma)$ is outer, $E/H(T, \sigma)$ is also outer. Moreover, as recalled in \S\ref{ssec:prelim_1}, $\widehat{E}/H(T, \sigma)$ is finite and, hence, $E/H(T, \sigma)$ is also finite. Furthermore, using that $\widehat{E}/H(T, \sigma)$ is Galois, finite, and outer, we get from, e.g., \cite[Theorem 3.3.11]{Coh95} that
$${\rm{Aut}}(E/H(T, \sigma)) = N_{{\rm{Gal}}(\widehat{E} / H(T, \sigma))}({\rm{res}}^{-1}(K)) / {\rm{res}}^{-1}(K),$$ where $N_{{\rm{Gal}}(\widehat{E} / H(T, \sigma))}({\rm{res}}^{-1}(K))$ denotes the normalizer of ${\rm{res}}^{-1}(K)$ in ${\rm{Gal}}(\widehat{E} / H(T, \sigma))$. But, via the isomorphism ${\rm{res}}$, we have 
$$N_{{\rm{Gal}}(\widehat{E} / H(T, \sigma))}({\rm{res}}^{-1}(K))/{\rm{res}}^{-1}(K) = N_{{\rm{Gal}}(\widehat{e} / h^{\langle \sigma \rangle}(T^n))}(K)/K,$$ 
where $N_{{\rm{Gal}}(\widehat{e} / h^{\langle \sigma \rangle}(T^n))}(K)$ denotes the normalizer of $K$ in ${\rm{Gal}}(\widehat{e} / h^{\langle \sigma \rangle}(T^n))$. Since the latter quotient group equals ${\rm{Aut}}(e/h^{\langle \sigma \rangle}(T^n))$, we get ${\rm{Aut}}(E/H(T, \sigma))={\rm{Aut}}(e/h^{\langle \sigma \rangle}(T^n))$.

To conclude the proof, it remains to show
\begin{equation} \label{eq:angelot}
H(T, \sigma) \otimes_{h^{\langle \sigma \rangle}(T^n)} e = E.
\end{equation}
To that end, note first that ${\rm{res}}({\rm{Gal}}(\widehat{E} / (H(T, \sigma) \otimes_{h^{\langle \sigma \rangle}(T^n)} e))) \subseteq K,$ i.e., ${\rm{Gal}}(\widehat{E} / (H(T, \sigma) \otimes_{h^{\langle \sigma \rangle}(T^n)} e)) \subseteq {\rm{res}}^{-1}(K)$. Then use  \cite[Theorem 3.3.11]{Coh95} once more to get that $\widehat{E}/E$ is Galois with Galois group ${\rm{res}}^{-1}(K)$. In particular, we have $E \subseteq H(T, \sigma) \otimes_{h^{\langle \sigma \rangle}(T^n)} e$. Moreover, since $\widehat{E} / H(T, \sigma)$ is outer, $\widehat{E}/E$ is also outer. As $\widehat{E}/E$ is Galois with finite Galois group ${\rm{res}}^{-1}(K)$, we get $[\widehat{E}:E] = |{\rm{res}}^{-1}(K)|$ (as recalled in \S\ref{ssec:prelim_1}), i.e., $[\widehat{E}:E] = [\widehat{e} : e]$. We then have
$$[E : H(T, \sigma)] = \frac{[\widehat{E} : H(T, \sigma)]}{[\widehat{E}:E]} = \frac{[\widehat{e}:h^{\langle \sigma \rangle}(T^n)]}{[\widehat{e}:e]} = [e : h^{\langle \sigma \rangle}(T^n)] = [H(T, \sigma) \otimes_{h^{\langle \sigma \rangle}(T^n)} e : H(T, \sigma)].$$
Hence, \eqref{eq:angelot} holds, as needed.
\end{proof}

\begin{proof}[Proof of Theorem \ref{thm:intro_2}]
Let $n$ be the order of $\sigma$. By Theorem \ref{thm:intro_1}(1), there is a finite separable field extension $e / h^{\langle \sigma \rangle}(T^n)$ of automorphism group $G$ which is totally split at $\langle T^n \rangle$. Then the Galois closure of $e$ over $h^{\langle \sigma \rangle}(T^n)$ embeds into $h^{\langle \sigma \rangle}((T^n))$ and it remains to apply Lemma \ref{angelot} to conclude the proof.
\end{proof}

\begin{remark}
Given a division ring $H$ with center $h$, recall that the center of $H(T)$ equals $h(T)$ (see, e.g., \cite[Proposition 2.1.5]{Coh95}). Hence, from the last two proofs (with $\sigma={\rm{id}}_H$), we have this conclusion: given a finite group $G$ and a division ring $H$ with center $h$, there is a finite outer extension $E$ of $H(T)$ with ${\rm{Aut}}(E/H(T))=G$ and whose center $e$ is an $h$-regular extension of $h(T)$. As recalled in \S\ref{ssec:prelim_1}, a division ring that is an outer extension of a field is necessarily a field. Hence, given a field $H$, we get that, given a finite group $G$, there is an $H$-regular field extension $E$ of $H(T)$ with ${\rm{Aut}}(E/H(T))=G$, which is \cite[th\'eor\`eme A]{DL21}.
\end{remark}

\bibliography{Biblio2}
\bibliographystyle{alpha}

\end{document}